\documentclass[11pt]{article}
\usepackage{fullpage}
\usepackage{algorithmic}
\usepackage{algorithm}
\usepackage{epsfig}
\usepackage{graphicx}
\usepackage{latexsym}
\usepackage{amsmath}
\usepackage{amsfonts}
\usepackage{multirow}
\usepackage{amssymb}
\usepackage[sort,numbers]{natbib}
 \usepackage{tikz}

\usetikzlibrary{shapes,arrows,shadows}
\usepackage{amsmath,bm,times}

\usepackage{mathrsfs}
\usepackage{pifont}
\usepackage{yhmath}
\usepackage{bbm}
\usepackage{amsthm}
\usepackage{booktabs}
\usepackage{stmaryrd}
\usepackage{wasysym}
\usepackage{bbm}
\usepackage{array}
\usepackage{enumerate}
\usepackage{dsfont}

\usepackage{enumitem}      %

\numberwithin{equation}{section}
\numberwithin{figure}{section}

\newtheorem{theorem}{Theorem}[section]

\newtheorem{corollary}[theorem]{Corollary}

\newtheorem{definition}[theorem]{Definition}
\newtheorem{remark}[theorem]{Remark}

\theoremstyle{remark}

\makeatletter
\newcommand\figcaption{\def\@captype{figure}\caption}
\newcommand\tabcaption{\def\@captype{table}\caption}
\makeatother

\DeclareMathAlphabet{\mathpzc}{OT1}{pzc}{m}{it}

\begin{document}
\newcounter{my}
\newenvironment{mylabel}
{
\begin{list}{(\roman{my})}{
\setlength{\parsep}{-1mm}
\setlength{\labelwidth}{8mm}
\usecounter{my}}
}{\end{list}}

\newcounter{my2}
\newenvironment{mylabel2}
{
\begin{list}{(\alph{my2})}{
\setlength{\parsep}{-0mm} \setlength{\labelwidth}{8mm}
\setlength{\leftmargin}{3mm}
\usecounter{my2}}
}{\end{list}}

\newcounter{my3}
\newenvironment{mylabel3}
{
\begin{list}{(\alph{my3})}{
\setlength{\parsep}{-1mm}
\setlength{\labelwidth}{8mm}
\setlength{\leftmargin}{10mm}
\usecounter{my3}}
}{\end{list}}

\title{\bf Log-Sobolev Inequality for Wolff Dynamics and Application to the Condensation of Eigen Microstate in the 1D Ising Model \thanks{\noindent{\bf 2020 Mathematics Subject Classification}\quad 39B62;60K35}}
\author{Cui Kaiyuan\thanks{Academy of Mathematics and Systems Science, University of Chinese Academy of Sciences, Chinese Academy of Sciences, Beijing 100190, China,({ cuiky@amss.ac.cn})}\and Gong Fuzhou \thanks{Academy of Mathematics and Systems Science, University of Chinese Academy of Sciences, Chinese Academy of Sciences, Beijing 100190, China,({ fzgong@amt.ac.cn})}}
	\date{}
\maketitle

\vspace{-5em}

\begin{center}\large

\end{center}


\begin{abstract}
The Wolff dynamics is a non-local Markov chain widely used for simulating the Ising model due to its effectiveness in reducing critical slowing down compared to the Glauber dynamics. Despite extensive algorithmic and numerical studies, a rigorous probabilistic understanding remains limited. In this paper, we take a first step toward addressing this gap. For the one-dimensional (1D) Ising model, we first derive the transition probabilities of the Wolff dynamics and show that, at the critical point, it converges to the two fully aligned configurations and subsequently oscillates between them. This behavior is absent in the Glauber dynamics. Second, we establish a log-Sobolev inequality with an explicit constant for the Wolff dynamics in the entire subcritical regime and derive quantitative bounds on its ergodic averages. As a by-product, at infinite temperature, the obtained constant coincides with the classical log-Sobolev constant of the random walk on the hypercube. Finally, we apply these results to analyze the spectrum of the sample covariance matrix generated by the Wolff dynamics, which was used by Chen et al. to study condensation of eigen microstate. We prove that the spectral behavior agrees with their simulations in the 1D Ising model, thereby providing theoretical support for their findings.\\

\noindent{\bf Keywords:}~Log-Sobolev inequality; Ising model; Wolff dynamics; sample covariance matrix; condensation;
\end{abstract}
\section{Introduction}\label{sec:1}
The Ising model, introduced by Lenz \cite{Lenz1920} and Ising \cite{Ising1925} as a model for ferromagnetism, is a fundamental model in statistical physics and has had a profound impact on mathematics, physics, and computer science. A central problem is to efficiently simulate the model and sample from the associated Gibbs measure. Classical approaches are based on single-site update Markov chains such as the Glauber dynamics, which are known to suffer from critical slowing down near the critical temperature. To overcome this difficulty, Swendsen and Wang \cite{SWENDSEN1987} proposed a cluster algorithm inspired by ideas from percolation theory. Shortly thereafter, Wolff \cite{Wolff1989} introduced another percolation-based algorithm, now known as the  Wolff dynamics (also called the Wolff algorithm), which reduces critical slowing down even more effectively. These Markov chains have been widely used in algorithmic and numerical studies. From a probabilistic perspective, a key tool in analyzing convergence to equilibrium is provided by functional inequalities such as the Poincaré inequality, the modified log-Sobolev inequality, and the log-Sobolev inequality. While these inequalities have been extensively studied for the Glauber dynamics and the Swendsen-Wang dynamics (see e.g. \cite{Nam2019,Eldan2022document,Blanca2022,Ullrich2013,Ullrich2014}), to the best of our knowledge, corresponding results for the Wolff dynamics remain largely unexplored.

In this paper, we study the Wolff dynamics for the one-dimensional (1D) Ising model and establish a log-Sobolev inequality with an explicit constant in the entire subcritical regime. First, we derive the transition probabilities of the Wolff dynamics for the 1D Ising model with coupling constant $\hat{J}$, which admit an explicit representation based on a decomposition into connected components of a configuration on $\mathbb{Z}$. Moreover, we verify reversibility (i.e., the detailed balance condition) and ergodicity in the entire subcritical regime (i.e., $\hat{J}\in[0,+\infty)$). At the critical point (i.e., $\hat{J}=+\infty$), we show that, for any initial configuration $Y_{1}\in\Omega:=\{-1,+1\}^{N}$, the Wolff dynamics eventually converges almost surely to the set $\{-\textbf{1},+\textbf{1}\}$ and subsequently oscillates between these two configurations, where  $\textbf{1}:=(1,\cdots,1)^{T}$ denotes the all-ones vector in $\mathbb{R}^N$ and $N$ is the number of spins. In contrast, the Glauber dynamics does not exhibit this behavior. Finally, we establish both a log-Sobolev inequality and a Poincar\'e inequality for the Wolff dynamics with explicit constants in the entire subcritical regime, which yield quantitative bounds on its ergodic averages. As a by-product, at infinite temperature (i.e., $\hat{J}=0$), the Wolff dynamics reduces to the simple random walk on the hypercube $\Omega=\{-1,+1\}^{N}$,  and the corresponding log-Sobolev constants recover the classical  constants of Gross \cite{Gross1975document} and Diaconis and Saloff-Coste \cite{Diaconis1996}.

As an application of the above results, we study the spectral properties of the sample covariance matrix constructed from $M$ microstates generated by the Wolff dynamics \cite{Wolff1989}. Here $M$ denotes the number of samples. This problem is motivated by recent works of Chen Xiaosong and collaborators \cite{hu2019ducument,sun2021ducument}. Using Monte Carlo simulations based on the Wolff dynamics, they observed the following behavior for the 1D Ising model: as $M\to+\infty$ and $N\to+\infty$, the largest eigenvalue of the sample covariance matrix vanishes for $\hat{J}\in[0,+\infty)$, whereas at the critical point $\hat{J}=+\infty$ it converges to a finite nonzero limit. This phenomenon has been interpreted as a condensation of eigen microstate, analogous to the Bose-Einstein condensation.

Building on our quantitative estimates for the Wolff dynamics, we provide a rigorous analysis of this phenomenon in the 1D Ising model. In particular, we show that the largest eigenvalue $\lambda_1(\mathbb{K})$ of the sample covariance matrix $\mathbb{K}$ converges almost surely to $1$ as $M\to+\infty$ at $\hat{J}=+\infty$, and to $1/N$ at $\hat{J}=0$. Moreover, for $\hat{J}\in[0,+\infty)$, the spectral radius of $\mathbb{K}$ converges to $0$ both almost surely under the iterated limit $M\to+\infty$, $N\to+\infty$, and in $L^2(\mathbb{P}_{\mu_{N}})$ under the double limit with $N=o(\sqrt{M})$ (i.e., $N/\sqrt{M} \to 0$ as $M\to+\infty$), where $\mathbb{P}_{\mu_{N}}$, to be defined in Section~\ref{dec:logsobolevwolff}, denotes the law of the Wolff dynamics started from the Gibbs measure $\mu_{N}$ of the 1D Ising model. These results are consistent with simulations reported in \cite{hu2019ducument,sun2021ducument} and provide theoretical support for their findings in the 1D Ising model. A more detailed discussion of the consistency with simulations is provided in Section~\ref{sec:interpretation}.

The paper is organized as follows. In Section~\ref{sec:2}, we collect preliminaries used in the proofs. In Section~\ref{sec:3}, for the 1D Ising model, we derive the transition probabilities of the Wolff dynamics and analyze its basic properties. We also establish the log-Sobolev and Poincar\'e inequalities in the entire subcritical regime. In Section~\ref{sec:converspectralradius}, we investigate the convergence of the spectral radius of $\mathbb{K}$ as $M\to+\infty$ and $N\to+\infty$, building on the results of Section~\ref{sec:3}. Specifically, Section~\ref{sec:background} briefly reviews the background and motivation, and Section~\ref{sec:renor} analyzes the spectrum of $\mathbb{K}$ at the two distinguished temperatures corresponding to fixed points of the renormalization group (RG) transformation. Sections~\ref{sec:sec:almost} and~\ref{sec:4} estimate the spectral radius of $\mathbb{K}$ in the entire subcritical regime under both the iterated and double limits. Finally, Section~\ref{sec:interpretation} summarizes these results and discusses their agreement with simulations.

\section{Preliminaries}\label{sec:2}
In this section, we present key concepts that will be used throughout the paper.
\subsection{Notation}
We consider the 1D Ising model with nearest-neighbor interactions under periodic boundary conditions (i.e., $\sigma_{N+1} \equiv \sigma_1$). When there is no external field, the Hamiltonian is given by
\begin{align}\label{eq:1.1}
	\mathcal{H}(\sigma)=-J\sum_{i=1}^{N}\sigma_{i}\sigma_{i+1}, \qquad \sigma=(\sigma_{1},\cdots,\sigma_{N})\in\Omega=\{-1,+1\}^{N},
\end{align}
where $J>0$ is the ferromagnetic coupling constant. Let $\beta=1/T$ denote the inverse temperature, and define $\hat{J}:=\beta J$, which will be referred to as the coupling constant when no confusion arises. The Gibbs measure associated with the Hamiltonian $\mathcal{H}$ is given by
\begin{align}\label{eq:isinggibbs}
	\mu_{N}(\sigma)=\frac{1}{Z_{N}}\exp\left(\hat{J}\sum_{i=1}^{N}\sigma_{i}\sigma_{i+1}\right), 
\end{align}
where $Z_N$ is the normalizing constant, also known as the partition function in statistical mechanics, given by
\begin{align*}
	Z_{N}=\sum_{\sigma\in\Omega}\exp\left(\hat{J}\sum_{i=1}^{N}\sigma_{i}\sigma_{i+1}\right).
\end{align*}
This model is exactly solvable, and $Z_{N}$ can be written as $$Z_{N}=\lambda_{+}^{N}+\lambda_{-}^{N},$$ where $\lambda_{+}=e^{\hat{J}}+e^{-\hat{J}},\lambda_{-}=e^{\hat{J}}-e^{-\hat{J}}$, see \cite{Morandi2004document} for more details. We denote by $E_{\mu_{N}}[\cdot]$ the expectation with respect to the measure $\mu_{N}$, and define the covariance
\begin{align*} 
	\langle f;g\rangle_{\mu_{N}}:=\text{Cov}_{\mu_{N}}(f;g)=E_{\mu_{N}}[fg]-E_{\mu_{N}}[f]E_{\mu_{N}}[g].
\end{align*}
Under the thermodynamic limit, the two point correlation function 
\begin{align*} \mathcal{G}_{i,j}:=\lim\limits_{N\rightarrow+\infty}\langle \sigma_{i};\sigma_{j}\rangle_{\mu_{N}},
\end{align*}
admits the explicit expression
\begin{align}\label{2.10}
	\mathcal{G}_{i,j}=\left (\frac{\lambda_{-}}{\lambda_{+}}\right)^{|i-j|}=e^{-\frac{|i-j|}{\xi}},
\end{align}	
where $\xi$ is the correlation length
\begin{align*}			\xi:=&-\frac{1}{\log (\boldsymbol{\theta})},
\end{align*}	
and
\begin{align*}			
	\boldsymbol{\theta}:=&\frac{\lambda_{-}}{\lambda_{+}}=\frac{e^{\hat{J}}-e^{-\hat{J}}}{e^{\hat{J}}+e^{-\hat{J}}}.
\end{align*}	
We note that $\log$ denotes the natural logarithm throughout the paper. Hence,
\begin{align*}			
	\xi=\left\{\begin{array}{ccc}
		+\infty,& \qquad T=0, \hat{J}=+\infty, \\
		0,& \qquad  T=+\infty, \hat{J}=0.
	\end{array}
	\right.
\end{align*}
\begin{remark}\label{rem:1.1}
	For the 1D Ising model, it is well known that no phase transition occurs. Nevertheless, following \cite{hu2019ducument}, the zero temperature ($\hat{J}=+\infty$) can be regarded as a critical point $T_c$, in the sense that the correlation length diverges.
\end{remark}
\subsection{The sample covariance matrix and the correlation matrix of microstates}\label{sec:2.1}
Given an initial configuration $Y_{1}$ in the configuration space  $\Omega=\{-1,+1\}^{N}$, we generate samples from the Ising model via the Wolff dynamics \cite{Wolff1989}. For $1\leq i\leq M$, the $i$-th microstate is represented by an $N$-dimensional vector
\begin{align*}
	X_{i}:=(X_{1i},X_{2i},\cdots,X_{Ni})^{T}=\frac{1}{\sqrt{N}}Y_{i},
\end{align*} 
where
\begin{align*}
	Y_{i}:=(Y_{1i},Y_{2i},\cdots,Y_{Ni})^{T},\qquad Y_{ki}=\pm1,\ k=1,\cdots,N.
\end{align*} 
We collect $M$ such microstates to form the statistical ensemble $\mathbb{X}=(X_{1},\cdots,X_{M})$, which is an $N\times M$ matrix. We then define the $M\times M$ correlation matrix of microstates by
\begin{align*}
	\mathbb{C}:=\frac{1}{M}\mathbb{X}^{T}\mathbb{X}.
\end{align*}
Its entries are given by
\begin{align*}
	\mathbb{C}_{ij}=\frac{1}{M}X^{T}_{i} X_{j}=\frac{1}{M}\sum_{k=1}^{N}X_{ki} X_{kj},\qquad i,j=1,\cdots M,
\end{align*}
where $\mathbb{C}_{ij}$ characterizes the correlation between microstates $i$ and $j$. In particular, $X^{T}_{i} X_{i}=1$ and $\text{Tr}(\mathbb{C})=1$, where $\text{Tr}(\mathbb{C})$ denotes the trace of matrix $\mathbb{C}$. 
Moreover, we define the $N\times N$ sample covariance matrix \begin{align}\label{eq:defcov}
	\mathbb{K}:=\frac{1}{M}\mathbb{X}\mathbb{X}^{T},
\end{align}
with elements
\begin{align*}	\mathbb{K}_{ij}=\frac{1}{M}\sum_{r=1}^{M}X_{ir} X_{jr},\qquad i,j=1,\cdots N,
\end{align*}
where $\mathbb{K}_{ij}$ characterizes the correlation  
between the spin $i$ and spin $j$. 
\subsection{Log-Sobolev and Poincar\'e inequalities}\label{sec:Dirichletform}
Let $(\Omega,\mu_{N})$ be a probability space, and let $P$ be a reversible Markov transition kernel with invariant measure $\mu_{N}$. The associated Dirichlet form is defined by
\begin{align*}	
	\mathcal{E}_{\mu_{N}}(f):=\langle f,(I-P)f\rangle_{\mu_{N}}=\frac{1}{2}\sum_{\sigma,\eta\in\Omega}[f(\sigma)-f(\eta)]^{2}P(\eta,\sigma)\mu_{N}(\eta),
\end{align*}
where $\langle\cdot, \cdot\rangle_{\mu_{N}}$ denotes the inner product on $L^{2}(\mu_{N})$ and $f$ is a real-valued function on $\Omega$. 
\begin{definition}\label{def:logsobolev}
	We say that $P$ satisfies a log-Sobolev inequality with constant $C_{\text{LS}}$ if
	\begin{align*} 
		\mathrm{Ent}_{\mu_{N}}(f)
		:= E_{\mu_{N}}[f\log f]-E_{\mu_{N}}[f]\log E_{\mu_{N}}[f]\leq&C_{\text{LS}}\mathcal{E}_{\mu_{N}}(\sqrt{f}),
	\end{align*}
	for all non-negative functions $f$ defined on $\Omega$.
\end{definition}
The Poincar\'e inequality follows from the log-Sobolev inequality via linearization, see \cite{Ledoux2004,Bakry2014} for more details.
\begin{definition}\label{def:poincare} We say that $P$ satisfies a Poincar\'e inequality with constant $C_{\text{PI}}$ if
	\begin{align*} 
		\mathrm{Var}_{\mu_{N}}(f)
		:=E_{\mu_{N}}[f^{2}]-(E_{\mu_{N}}[f])^{2}\leq&C_{\text{PI}}\mathcal{E}_{\mu_{N}}(f),
	\end{align*}
	for all functions $f$ defined on $\Omega$.
\end{definition}
\begin{remark}\label{rem:lsirelatepoin}
	For any chain $P$, the log-Sobolev constant $C_{\text{LS}}$ and the Poincar\'e constant $C_{\text{PI}}$ are related by $2C_{\text{PI}}\leq C_{\text{LS}}$, see Lemma 3.1 in \cite{Diaconis1996} for instance.
\end{remark}
\section{The Wolff dynamics and log-Sobolev inequality}\label{sec:3}
The Wolff dynamics is a non-local Markov chain for simulating the Ising model, introduced by Wolff (1989) as a single-cluster variant of the Swendsen-Wang dynamics. In contrast to local update algorithms such as the Glauber or Metropolis dynamics, which flip a single spin at a time, the Swendsen-Wang and Wolff dynamics update entire clusters of aligned spins. In the Swendsen-Wang dynamics, clusters are constructed via a bond percolation step and flipped independently with probability $\frac{1}{2}$, whereas the Wolff dynamics grows a single cluster from a randomly chosen seed and flips it with probability $1$. As demonstrated in \cite{Luijten2006}, critical slowing down in the Wolff dynamics is reduced even more effectively than in the Swendsen-Wang dynamics.
\subsection{The Wolff dynamics}\label{dec:wolff}
For completeness, we briefly describe the procedure of the Wolff dynamics (see e.g. \cite{Luijten2006,Tamayo1990ducument}) for the Ising model as follows:
\begin{enumerate}
	\item[1.] A single site  $i_{0}$ of the lattice is selected at random to build the cluster.
	\item[2.] All nearest neighbors $j$ of this single site $i_{0}$ are added to the cluster with a probability $\tilde{p}_{i_{0}j}=1-\exp\left(-\hat{J}(1+\sigma_{i_{0}}\sigma_{j})\right)$, provided spins  are parallel and the bond between $i_{0}$ and $j$ has not been considered before.
	\item[3.] Each site $j$ that is indeed added to the cluster is also placed on the stack.
	Once all neighbors of $i$ have been considered for inclusion in the cluster, a site is retrieved from the stack and all its neighbors are considered in turn for inclusion in the cluster as well, following step 2.
	\item[4.] Steps 2 and 3 are repeated iteratively until the stack is empty.
	\item[5.] Once the cluster has been completed, all spins that belong to the cluster
	are flipped.
\end{enumerate}
Having described the practical implementation of the Wolff dynamics, we now proceed to formulate its transition kernel in probabilistic terms. To this end, we introduce the necessary notation for analyzing the key properties of the Wolff dynamics in the 1D Ising model. Let $\Omega=\{-1,+1\}^N$ be the finite configuration space, and let $(Y_k)_{k\in\mathbb{Z}^+}$ be a discrete-time Markov chain on $\Omega$, defined on a probability space $(\mathfrak{E},\mathfrak{F},\mathbb{P})$, which evolves according to the Wolff dynamics described above.

Let $G=([N],\mathcal{G})$ be an undirected graph, where $[N]:=\{1,\cdots,N\}\subset\mathbb{Z}$ is the vertex set and $\mathcal{G}$ is the edge set. Under periodic boundary conditions, an edge $e=(e_{-},e_{+})$ belongs to $\mathcal{G}$ if and only if $e_{+}-e_{-}=1$ or $(e_{-},e_{+})=(N,1)$, and we write $i\sim j$ if $(i,j)\in\mathcal{G}$. For any subset $A\subset[N]$, let $\sigma^{A}$ denote the configuration obtained from $\sigma\in\Omega$ by flipping the spins at all sites in $A$. For a given configuration $\sigma\in\Omega$, define
\begin{align}\label{eq:decomposet}
	\mathcal{B}^{\sigma}_{+}:=&\{e\in\mathcal{G}\mid\sigma_{e_{+}}\sigma_{e_{-}}=+1\}, \nonumber\\
	\mathcal{B}^{\sigma}_{-}:=&\{e\in\mathcal{G}\mid\sigma_{e_{+}}\sigma_{e_{-}}=-1\},\\
	\mathcal{C}^{\sigma}:=&\{k\in[N]\mid\sigma_{k}=+1\}, \nonumber\\
	\mathcal{D}^{\sigma}:=&\{k\in[N]\mid\sigma_{k}=-1\}. \nonumber
\end{align}
For any $\mathcal{C}\subset[N]$, define its edge boundary
\begin{align*}
	\partial \mathcal{C}:=\{(i,j)\in\mathcal{G}\mid i\in\mathcal{C},\, j\notin\mathcal{C}\},
\end{align*}
and for any $\mathcal{V}\subset[N]$, define its vertex boundary
\begin{align*}
	\hat{\partial} \mathcal{V}:=\{k\in \mathcal{V}\mid k+1\notin \mathcal{V}\ \text{or}\ k-1\notin \mathcal{V}\}.
\end{align*}
Obviously, $\#\{\mathcal{C}^{\sigma}\}+\#\{\mathcal{D}^{\sigma}\}=N$, where $\#\{V\}$ is the cardinality of $V\subset\mathbb{Z}$. The sets $\mathcal{C}^{\sigma}$ and $\mathcal{D}^{\sigma}$ form a partition of $[N]$, and each can be decomposed into pairwise disjoint connected components
\begin{align}\label{eq:decom}
	\mathcal{C}^{\sigma}=\bigcup_{r=1}^{\tilde{\mathcal{C}}^{\sigma}}\mathcal{C}_{r}^{\sigma},\qquad 	\mathcal{D}^{\sigma}=\bigcup_{r=1}^{\tilde{\mathcal{D}}^{\sigma}}\mathcal{D}_{r}^{\sigma},
\end{align}
where $\mathcal{C}_{r}^{\sigma}$ and $\mathcal{D}_{r}^{\sigma}$ denote connected components, and $\tilde{\mathcal{C}}^{\sigma}$ and $\tilde{\mathcal{D}}^{\sigma}$ are their respective numbers. 

Here, a subset $A \subset [N]$ is called connected if it is the vertex set of a cluster $\mathfrak{A}$, where a cluster is defined as a connected subgraph of $G=([N],\mathcal{G})$. Under periodic  boundary conditions, one of the following three cases holds:
\[
\tilde{\mathcal{C}}^{\sigma}=\tilde{\mathcal{D}}^{\sigma}, \quad
\tilde{\mathcal{C}}^{\sigma}=1,\ \tilde{\mathcal{D}}^{\sigma}=0, \quad \text{or} \quad
\tilde{\mathcal{C}}^{\sigma}=0,\ \tilde{\mathcal{D}}^{\sigma}=1.
\]
Based on the above decomposition, we derive the transition probabilities of the Wolff dynamics for the 1D Ising model. The resulting expression highlights a key difference from the classical Glauber and Swendsen-Wang dynamics, where the transition kernel assigns positive probability to leaving the configuration unchanged.
\begin{theorem}\label{thm: transitionmatrix}
	Let $(\{Y_{k}\}_{k\in\mathbb{Z}^{+}},\mathbb{P})$ be the Markov chain generated by the Wolff dynamics, and let  $\mu_{N}$ denote the Gibbs measure of the 1D Ising model defined in Equation~\eqref{eq:isinggibbs}. Then the following conclusions hold.
	\begin{enumerate}
		\item For $\hat{J}\in(0,+\infty)$, consider a configuration $\sigma\in\Omega$ and a connected subset $A\subset[N]$ such that $\sigma_i\sigma_j=+1$ for all $i,j\in A$. The transition probability  $P_{\text{W}}(\sigma,\sigma^{A})$ is then given by
		\begin{equation}\label{eq:wolffedgecluster}\begin{split}
				P_{\text{W}}(\sigma,\sigma^{A})
				:=&	\mathbb{P}(Y_{2}=\sigma^{A}\mid Y_{1}=\sigma)\\
				=&\frac{\#\{A\}}{N}\begin{cases}
					\kappa^{\#\{A\}-1}\hat{\kappa}^{2}, & \#\{\partial A\cap\mathcal{B}_{+}^{\sigma}\}=2,  \\
					\kappa^{\#\{A\}-1}\hat{\kappa}, & \#\{\partial A\cap\mathcal{B}_{+}^{\sigma}\}=1,  \\
					\kappa^{\#\{A\}-1}, & \partial A\subset\mathcal{B}_{-}^{\sigma},\#\{\partial A\}\neq0,\\
					(N\hat{\kappa}+\kappa)\kappa^{N-1}, & \tilde{\mathcal{C}}^{\sigma} \tilde{\mathcal{D}}^{\sigma}=0, \#\{A\}=N,
				\end{cases}
			\end{split}
		\end{equation}
		where $\kappa=1-e^{-2\hat{J}}$ and $\hat{\kappa}=e^{-2\hat{J}}$.
		Moreover, $P_{\text{W}}(\sigma,\sigma^{A})$ admits a more detailed representation in terms of the connected components
		\begin{align}\label{eq:wolfftransition}
			P_{\text{W}}(\sigma,\sigma^{A})
			=\frac{\#\{A\}}{N}
			\begin{cases}
				1, 
				& \exists r:\ A\subset\mathcal{K}_r^\sigma,\ 
				\#\{\hat{\partial}\mathcal{K}_r^\sigma\}=1,\\[4pt]
				\kappa^{\#\{A\}-1}\hat{\kappa}^{2}, 
				& \exists r:\ A\subset\mathcal{K}_r^\sigma,\ 
				\#\{\hat{\partial}\mathcal{K}_r^\sigma\}=2,\ \#\{A\cap\hat{\partial}\mathcal{K}_{r}^{\sigma}\}=0,\\[4pt]
				\kappa^{\#\{A\}-1}\hat{\kappa}, 
				& \exists r:\ A\subset\mathcal{K}_r^\sigma,\ 
				\#\{\hat{\partial}\mathcal{K}_r^\sigma\}=2,\
				\#\{A\cap\hat{\partial}\mathcal{K}_{r}^{\sigma}\}=1,\\[4pt]
				\kappa^{\#\{A\}-1}, 
				& \exists r:\ A\subset\mathcal{K}_r^\sigma,\
				\#\{A\cap\hat{\partial}\mathcal{K}_{r}^{\sigma}\}=2,\\[4pt]
				\kappa^{\#\{A\}-1}\hat{\kappa}^{2}, 
				& \tilde{\mathcal C}^\sigma\tilde{\mathcal D}^\sigma=0,\ \#\{A\}\leq N-1,\\[4pt]
				(N\hat{\kappa}+\kappa)\kappa^{N-1}, 
				& \tilde{\mathcal C}^\sigma\tilde{\mathcal D}^\sigma=0,\ \#\{A\}=N,
			\end{cases}
		\end{align}	
		where
		\begin{align*}
			\mathcal{K}_r^\sigma\in\{\mathcal{C}_{r}^{\sigma},\mathcal{D}_{r}^{\sigma}\}.
		\end{align*}
		If $A$ is not connected, or if $A\subset[N]$ fails to satisfy $\sigma_i\sigma_j=+1$ for all $i,j\in A$, then 
		\begin{align*}
			P_{\text{W}}(\sigma,\sigma^{A})=\mathbb{P}(Y_{2}=\sigma^{A}\mid Y_{1}=\sigma)=0.
		\end{align*}
		In addition, for $\hat{J}=0$, we have $P_{\text{W}}(\sigma,\sigma^{A})=0$ for all $\#\{A\}\geq 2$ and for $A=\{i\}\subset[N]$, the transition probability reduces to
		\begin{equation}\label{eq:transition0}
			P_{\text{W}}(\sigma,\sigma^{i}):=P_{\text{W}}(\sigma,\sigma^{\{i\}})
			=\frac{1}{N}.
		\end{equation}
		Hence, when $\hat{J}=0$, the Wolff dynamics reduces to the simple random walk on the hypercube $\Omega=\{-1,+1\}^{N}$. 
		\item 
		For $\hat{J}\in [0,+\infty)$, the transition kernel $P_{\text{W}}$ satisfies the following detailed balance condition
		\begin{align*}	
			\mu_{N}(\eta)P_{\text{W}}(\eta,\sigma)=\mu_{N}(\sigma)P_{\text{W}}(\sigma,\eta),\qquad \eta,\sigma\in\Omega.
		\end{align*}
	\end{enumerate}
\end{theorem}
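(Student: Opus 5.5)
The plan is to compute $P_{\text{W}}(\sigma,\sigma^{A})$ directly from the cluster-growth description in Section~\ref{dec:wolff}, by conditioning on the seed $i_{0}$. On the cycle $G$, the cluster grown from $i_{0}$ is always a connected arc of equal spins. It contains $i_{0}$ and lies inside the component $\mathcal{K}_{r}^{\sigma}$ containing $i_{0}$. So $P_{\text{W}}(\sigma,\sigma^{A})=0$ unless $A$ is connected and monochromatic, which gives the vanishing statement.

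For such an $A$, the seed must lie in $A$, and each seed has probability $1/N$. Given the seed, the event that the final cluster equals $A$ has the same probability for every $i_{0}\in A$. The reason is that in one dimension the event depends only on the bonds: every one of the $\#\{A\}-1$ internal bonds of $A$ must be open, and every bond in $\partial A$ must be closed. Each bond is examined at most once, and a bond between parallel spins opens with probability $\kappa=1-e^{-2\hat J}$. A bond in $\partial A\cap\mathcal{B}_{-}^{\sigma}$ is automatically closed, while one in $\partial A\cap\mathcal{B}_{+}^{\sigma}$ is closed with probability $\hat\kappa=e^{-2\hat J}$. Multiplying gives $\frac{\#\{A\}}{N}\kappa^{\#\{A\}-1}\hat\kappa^{\#\{\partial A\cap\mathcal{B}_{+}^{\sigma}\}}$, which is the first three cases of \eqref{eq:wolffedgecluster}.

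The exceptional case is $A=[N]$ with $\sigma=\pm\mathbf{1}$. On the cycle, $N$ bonds are all parallel, and the cluster is everything exactly when at most one bond is closed. This happens with probability $\kappa^{N}+N\hat\kappa\kappa^{N-1}=(N\hat\kappa+\kappa)\kappa^{N-1}$, and summing over the $N$ seeds cancels the factor $\#\{A\}/N=1$.

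The refined form \eqref{eq:wolfftransition} is the same count rewritten in terms of where $A$ sits inside $\mathcal{K}_{r}^{\sigma}$:
\begin{itemize}
\item the number of boundary bonds of $A$ lying in $\mathcal{B}_{+}^{\sigma}$ is $2-\#\{A\cap\hat\partial\mathcal{K}_{r}^{\sigma}\}$ when $\#\{\hat\partial\mathcal{K}_{r}^{\sigma}\}=2$;
\item when $\#\{\hat\partial\mathcal{K}_{r}^{\sigma}\}=1$, the component is a single site, so $A=\mathcal{K}_{r}^{\sigma}$ has both boundary bonds in $\mathcal{B}_{-}^{\sigma}$ and $\#\{A\}=1$, giving $\frac1N\cdot1$;
\item in the homogeneous case with $\#\{A\}\le N-1$, both boundary bonds are parallel, giving $\hat\kappa^{2}$.
\end{itemize}
At $\hat J=0$ we have $\kappa=0$ and $\hat\kappa=1$, so only singletons survive, each with probability $1/N$, which is the simple random walk.

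For detailed balance, it suffices to take $\eta=\sigma^{A}$ with $A$ admissible, since otherwise both sides vanish. The target is the identity $\mu_{N}(\sigma^{A})/\mu_{N}(\sigma)=P_{\text{W}}(\sigma,\sigma^{A})/P_{\text{W}}(\sigma^{A},\sigma)$. First, $A$ is also connected and monochromatic in $\sigma^{A}$. The factors $\#\{A\}/N$ and $\kappa^{\#\{A\}-1}$ are the same in both directions, so the ratio of transition probabilities is $\hat\kappa^{\,b_{+}-b_{+}'}$, where $b_{+}=\#\{\partial A\cap\mathcal{B}_{+}^{\sigma}\}$ and $b_{+}'=\#\{\partial A\cap\mathcal{B}_{+}^{\sigma^{A}}\}$. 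Flipping $A$ swaps parallel and antiparallel status on every boundary bond, so $b_{+}'=\#\{\partial A\}-b_{+}$. On the Gibbs side, the energy changes only on boundary bonds, and
\[
\frac{\mu_{N}(\sigma^{A})}{\mu_{N}(\sigma)}=e^{-2\hat J(b_{+}-b_{+}')}=\hat\kappa^{\,b_{+}-b_{+}'},
\]
which matches. The case $A=[N]$ is immediate, since $\mu_{N}(\mathbf{1})=\mu_{N}(-\mathbf{1})$ and the formula is symmetric. The case $\hat J=0$ is symmetric as well.

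The main obstacle is the independence of the conditional cluster probability from the position of the seed inside $A$. This needs a careful argument that the stack exploration examines exactly the bonds inside $A$ and those in $\partial A$, each once, whatever the order of exploration. It also needs care in the wrap-around cases on the cycle: the single-site components, the case $\#\{A\}=N-1$ in a homogeneous configuration where both boundary bonds meet the same excluded site, and the full-cycle case where the last bond is examined with both endpoints already in the cluster.
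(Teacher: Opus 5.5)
Your proposal is correct and follows essentially the same route as the paper. Both condition on the seed lying in $A$, multiply $\kappa$ over the $\#\{A\}-1$ internal bonds and $\hat\kappa$ over $\partial A\cap\mathcal{B}_{+}^{\sigma}$, and handle $A=[N]$ with $\sigma=\pm\mathbf{1}$ by the ``at most one closed bond'' count; detailed balance is then checked by matching $\hat\kappa^{\,b_{+}-b_{+}'}$ with the Gibbs ratio via $\partial A\cap\mathcal{B}_{-}^{\sigma}=\partial A\cap\mathcal{B}_{+}^{\sigma^{A}}$. The independence of the cluster law from the seed position and the exploration order, which you flag as the main obstacle, is simply asserted in the paper, so your account is at least as careful as the original.
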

\begin{proof}
	Observe that steps 2, 3 and 4 of the algorithm ensure that a cluster is constructed by recursively adding only those nearest-neighbor sites whose spins are aligned with the current cluster. Therefore, if $A$ is disconnected, or if there exist $i,j\in A$ such that $\sigma_i\sigma_j=-1$, then no cluster with vertex set $A$ can be formed. Consequently,
	\begin{align*}
		P_{\text{W}}(\sigma,\sigma^{A})=\mathbb{P}(Y_{2}=\sigma^{A}\mid Y_{1}=\sigma)=0.
	\end{align*}	
	Now let $A\subset[N]$ be connected and satisfy $\sigma_i\sigma_j=+1$ for all $i,j\in A$. The probability that the initial site selected in step 1 belongs to $A$ is $\frac{\#\{A\}}{N}$. Conditioning on this event, if $\#\{\partial A\cap\mathcal{B}_{+}^{\sigma}\}\geq1$, the probability of constructing a cluster with vertex set exactly $A$ is given by the product of the inclusion probabilities along edges inside $A$ and the exclusion probabilities along edges in $\partial A\cap\mathcal{B}_{+}^{\sigma}$. Since, by step 5, the constructed cluster is flipped with probability 1, we obtain
	\begin{align*}
		P_{\text{W}}(\sigma,\sigma^{A})=\frac{\#\{A\}}{N}\kappa^{\#\{A\}-1}\hat{\kappa}^{\#\{\partial A\cap\mathcal{B}_{+}^{\sigma}\}}.
	\end{align*}
	If $\#\{\partial A\cap\mathcal{B}_{+}^{\sigma}\}=0$ and $\#\{\partial A\}\neq0$, then we have $\partial A\subset\mathcal{B}_{-}^{\sigma}$. In this case, all exclusion probabilities equal 1, and the cluster is determined solely by inclusion along edges inside $A$. Hence,
	\begin{align*}
		P_{\text{W}}(\sigma,\sigma^{A})=\frac{\#\{A\}}{N}\kappa^{\#\{A\}-1}.
	\end{align*}
	We finally consider the case $\partial A=\varnothing$ (i.e.,$A=[N]$). If $\sigma\notin\{-\textbf{1},+\textbf{1}\}$, then no cluster with vertex set $A$ can be formed, and thus
	\begin{align*}
		P_{\text{W}}(\sigma,\sigma^{A})=0.
	\end{align*}	
	If $\sigma\in\{-\textbf{1},+\textbf{1}\}$, due to the periodic boundary conditions, the probability of building the cluster with vertex set $A$ consists of two parts. The first term corresponds to the product over all sites in $A$ of their inclusion probabilities, multiplied by the exclusion probability associated with exactly one edge in $\mathcal{G}$. Since there are $N$ possible choices for this distinguished edge, this term is
	\begin{align*}
		N\kappa^{N-1}\hat{\kappa}.
	\end{align*}	
	The second term accounts for the case where all sites are included and no edge contributes an exclusion factor, yielding
	\begin{align*}
		\kappa^{N}.
	\end{align*}	
	Therefore, for $A=[N]$ and $\sigma\in\{-\textbf{1},+\textbf{1}\}$,
	\begin{align*}
		P_{\text{W}}(\sigma,\sigma^{A})=N\kappa^{N-1}\hat{\kappa}+\kappa^{N}.
	\end{align*}	
	When $\hat{J}=0$, we have $\kappa=0$, so the connection probability $\tilde{p}_{ij}=0$ for every edge $(i,j)\in \mathcal{G}$ in step 2 of the Wolff dynamics. Consequently, the cluster never grows beyond the initially selected site $i\in [N]$. Hence  $P_{\text{W}}(\sigma,\sigma^{A})=0$ for all $A$ with $\#\{A\}\geq 2$, and for $i\in [N]$
	\begin{align*}
		P_{\text{W}}(\sigma,\sigma^{i})
		=\frac{1}{N}.
	\end{align*}
	Collecting the above cases yields Equations~\eqref{eq:wolffedgecluster} and~\eqref{eq:transition0}. Moreover, these subsets can be described explicitly in terms of the connected components associated with $\sigma$, which leads directly to Equation~\eqref{eq:wolfftransition}.
	
	We now verify the detailed balance condition. For $A=[N]$ and $\sigma\in\{-\mathbf{1},+\mathbf{1}\}$, we have $\sigma^{A}\in\{-\mathbf{1},+\mathbf{1}\}$, and by Equation~\eqref{eq:wolffedgecluster},
	\begin{align*}
		\frac{P_{\text{W}}(\sigma,\sigma^{A})}{P_{\text{W}}(\sigma^{A},\sigma)}=\frac{N\kappa^{N-1}\hat{\kappa}+\kappa^{N}}{N\kappa^{N-1}\hat{\kappa}+\kappa^{N}}=1=\frac{\mu_{N}(\sigma^{A})}{\mu_{N}(\sigma)}.
	\end{align*}
	For a connected subset $A\subset[N]$, note that $\partial A\cap\mathcal{B}_{-}^{\sigma}=\partial A\cap\mathcal{B}_{+}^{\sigma^{A}}$. A direct computation gives
	\begin{align*}
		\frac{\mu_{N}(\sigma^{A})}{\mu_{N}(\sigma)}=&\frac{\exp(\hat{J}\sum_{i\sim j,i,j\notin A}\sigma_{i}\sigma_{j}+\hat{J}\sum_{i\sim j,i,j\in A}(-\sigma_{i})(-\sigma_{j})-\hat{J}\sum_{i\sim j,i\in A,j\notin A}\sigma_{i}\sigma_{j})}{\exp(\hat{J}\sum_{i\sim j,i,j\notin A}\sigma_{i}\sigma_{j}+\hat{J}\sum_{i\sim j,i,j\in A}\sigma_{i}\sigma_{j}+\hat{J}\sum_{i\sim j,i\in A,j\notin A}\sigma_{i}\sigma_{j})}\\
		=&\frac{\exp(-\hat{J}\sum_{i\sim j,i\in A,j\notin A}\sigma_{i}\sigma_{j})}{\exp(\hat{J}\sum_{i\sim j,i\in A,j\notin A}\sigma_{i}\sigma_{j})}
		=\exp(-2\hat{J}(\#\{\partial A\cap\mathcal{B}_{+}^{\sigma}\}-\#\{\partial A\cap\mathcal{B}_{-}^{\sigma}\}))\\
		=&\exp(-2\hat{J}(\#\{\partial A\cap\mathcal{B}_{+}^{\sigma}\}-\#\{\partial A\cap\mathcal{B}_{+}^{\sigma^{A}}\})).
	\end{align*}
	If $\#\{\partial A\cap\mathcal{B}_{+}^{\sigma}\}\geq1$, then 
	\begin{align*}
		\frac{P_{\text{W}}(\sigma,\sigma^{A})}{P_{\text{W}}(\sigma^{A},\sigma)}=&\frac{\frac{\#\{A\}}{N}\kappa^{\#\{A\}-1}\hat{\kappa}^{\#\{\partial A\cap\mathcal{B}_{+}^{\sigma}\}}}{\frac{\#\{A\}}{N}\kappa^{\#\{A\}-1}\hat{\kappa}^{\#\{\partial A\cap\mathcal{B}_{+}^{\sigma^{A}}\}}}
		=\exp(-2\hat{J}(\#\{\partial A\cap\mathcal{B}_{+}^{\sigma}\}-\#\{\partial A\cap\mathcal{B}_{+}^{\sigma^{A}}\}))\\
		=&\frac{\mu_{N}(\sigma^{A})}{\mu_{N}(\sigma)}.
	\end{align*}
	If $\#\{\partial A\}\neq0$ and $\partial A\subset\mathcal{B}_{-}^{\sigma}$, then $\partial A\subset\mathcal{B}_{+}^{\sigma^{A}}$, and
	\begin{align*}
		\frac{P_{\text{W}}(\sigma,\sigma^{A})}{P_{\text{W}}(\sigma^{A},\sigma)}=\frac{\frac{\#\{A\}}{N}\kappa^{\#\{A\}-1}}{\frac{\#\{A\}}{N}\kappa^{\#\{A\}-1}\hat{\kappa}^{\#\{\partial A\cap\mathcal{B}_{+}^{\sigma^{A}}\}}}=e^{2\hat{J}\#\{\partial A\}}=\frac{\mu_{N}(\sigma^{A})}{\mu_{N}(\sigma)}.
	\end{align*}
	This completes the proof of the detailed balance condition.
\end{proof}
\begin{remark}\label{rem:diffrentglauber}
	For the case $\hat{J}=0$, the transition probabilities of the Wolff dynamics reduce to Equation~\eqref{eq:transition0}, from which it follows that $P_{\text{W}}(\sigma,\sigma)=0$ for any $\sigma\in\Omega$. In contrast, for the Glauber dynamics, we have $P^{\text{GD}}(\sigma,\sigma)=1/2$ by Definition 2.12 in \cite{Chen2025document}. Indeed, as will be seen from Equation \eqref{eq:glauberinfinite} in the proof of Theorem \ref{thm:logsobolev}, at $\hat{J}=0$, the Glauber dynamics can be regarded as a lazy modification of the Wolff dynamics.
\end{remark}
\begin{remark}\label{rem:wolffdetailbalance} 
	The detailed balance condition for the Wolff dynamics was established in \cite{Wolff1989} for general $O(N)$ spin systems. 
	In the present 1D Ising setting, the transition probabilities in \eqref{eq:wolffedgecluster} can be verified directly to satisfy the detailed balance condition with respect to the Gibbs measure $\mu_{N}$. For completeness, we provide an explicit verification in the second part of Theorem~\ref{thm: transitionmatrix}.
\end{remark}
As observed by Wolff~\cite{Wolff1989}, the ergodicity of the dynamics follows from two key properties: first, there is always a non-vanishing probability that a cluster consists of only one site; second, there exists a reflection connecting any two configurations. We now give a rigorous verification of ergodicity for all $\hat{J}\in[0,+\infty)$. 
\begin{theorem}[Ergodicity for $\hat{J}\in[0,+\infty)$]\label{thm:birkhoff}
	Let $(Y_{k})_{k\in\mathbb{Z}^{+}}$ and $\mu_{N}$ be defined as in Theorem \ref{thm: transitionmatrix}. If the coupling constant $\hat{J}\in [0,+\infty)$, then for any initial configuration $Y_{1}\in\Omega$ and any real-valued function $g$ on $\Omega$, we have
	\begin{align*}	
		\lim_{M\rightarrow+\infty}\frac{1}{M}\sum_{k=1}^{M}g(Y_{k})=E_{\mu_{N}}[g], \qquad \mathbb{P}\text{-a.s.}.
	\end{align*}
\end{theorem}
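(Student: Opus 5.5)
The plan is to invoke the ergodic theorem for irreducible Markov chains on a finite state space. If $(Y_k)$ is an irreducible chain on the finite set $\Omega$ with stationary distribution $\pi$, then for every initial state and every real-valued $g$ we have $\frac{1}{M}\sum_{k=1}^{M}g(Y_k)\to E_{\pi}[g]$ almost surely. This needs no aperiodicity, which matters here: at $\hat{J}=0$ the chain is the simple random walk on the hypercube, which has period $2$. The proof then has two parts. First, we show that $\mu_{N}$ is stationary for $P_{\text{W}}$. Second, we show that $P_{\text{W}}$ is irreducible on $\Omega$ for every $\hat{J}\in[0,+\infty)$. Uniqueness of the stationary distribution of an irreducible finite chain then identifies the limit as $E_{\mu_{N}}[g]$.

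Stationarity follows directly from part 2 of Theorem~\ref{thm: transitionmatrix}. Summing the detailed balance identity $\mu_{N}(\eta)P_{\text{W}}(\eta,\sigma)=\mu_{N}(\sigma)P_{\text{W}}(\sigma,\eta)$ over $\eta\in\Omega$ gives $\mu_{N}P_{\text{W}}=\mu_{N}$. Note also that $\mu_{N}(\sigma)>0$ for all $\sigma\in\Omega$, since $\hat{J}<\infty$.

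Irreducibility rests on single-spin flips. For every $\sigma\in\Omega$ and $i\in[N]$ we will show $P_{\text{W}}(\sigma,\sigma^{i})>0$. When $\hat{J}=0$ this is exactly \eqref{eq:transition0}. When $\hat{J}\in(0,+\infty)$, take $A=\{i\}$. It is connected, it trivially satisfies the alignment condition, and $\#\{A\}-1=0$. So \eqref{eq:wolffedgecluster} gives
\[
P_{\text{W}}(\sigma,\sigma^{i})=\frac{1}{N}\hat{\kappa}^{m}, \qquad m=\#\{\partial A\cap\mathcal{B}_{+}^{\sigma}\}\in\{0,1,2\}.
\]
This is at least $e^{-4\hat{J}}/N>0$, because $\hat{\kappa}=e^{-2\hat{J}}>0$ whenever $\hat{J}<\infty$.

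The only care needed is with the degenerate cases of \eqref{eq:wolffedgecluster}. If $\sigma=\pm\mathbf{1}$ and $N\ge 2$, the singleton $A$ falls under the case where both boundary edges lie in $\mathcal{B}_{+}^{\sigma}$, giving the factor $\hat{\kappa}^{2}$; this matches the line $\tilde{\mathcal C}^\sigma\tilde{\mathcal D}^\sigma=0$, $\#\{A\}\le N-1$ of \eqref{eq:wolfftransition}. The trivial case $N=1$ is handled separately, and for $N=2$ the two boundary "edges" coincide, but positivity still holds. Either way the probability is strictly positive.

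Given any $\sigma,\eta\in\Omega$, let $D=\{i:\sigma_i\neq\eta_i\}$. Flipping the sites of $D$ one at a time yields a path from $\sigma$ to $\eta$ of length $\#\{D\}\le N$ in which every step has positive probability. If $\sigma=\eta$, go to $\sigma^{1}$ and come back. Hence $P_{\text{W}}$ is irreducible, and the ergodic theorem applied to the finite irreducible chain gives the almost sure convergence for every initial $Y_{1}$ and every $g$.

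The main obstacle is conceptual rather than computational. Positivity of single-site flips uses $\hat{J}<\infty$ in an essential way. At $\hat{J}=+\infty$ we have $\hat{\kappa}=0$, and irreducibility fails, consistent with the absorption into $\{-\mathbf{1},+\mathbf{1}\}$ described in the introduction. We must also not appeal to aperiodicity, which fails at $\hat{J}=0$ since $P_{\text{W}}(\sigma,\sigma)=0$ and every move changes the parity of the number of $+1$ spins. This is why the argument uses the Cesàro-average ergodic theorem rather than convergence of $P_{\text{W}}^{k}$.
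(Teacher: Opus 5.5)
Your proposal is correct and follows the paper's route: you establish irreducibility of $P_{\text{W}}$ on the finite space $\Omega$, take stationarity of $\mu_N$ from detailed balance, and apply the ergodic theorem for irreducible finite chains, which needs no aperiodicity (the paper invokes the same theorem, Theorem C.1 of Levin--Peres). The only difference is in the irreducibility step for $\hat J\in(0,+\infty)$. There the paper flips whole minus-components to pass through $+\mathbf{1}$ and also checks aperiodicity, which is not needed, whereas your single-site bound $P_{\text{W}}(\sigma,\sigma^{i})\geq e^{-4\hat J}/N$ covers all $\hat J\in[0,+\infty)$ in one argument.
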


\begin{proof}
	Recall that for any index set $A\subset [N]$, $\sigma^{A}$ denotes the configuration obtained from $\sigma\in\Omega$ by flipping the spins at the sites in $A$. For a given configuration $\tilde{\sigma}\in\Omega$ such that $\tilde{\sigma}\notin\{-\textbf{1},+\textbf{1}\}$, it follows from Equation \eqref{eq:wolfftransition} in Theorem \ref{thm: transitionmatrix} that for any $\hat{J}\in(0,+\infty)$,
	\begin{align*}
		P_{\text{W}}(\tilde{\sigma},\tilde{\sigma}^{\mathcal{D}_{k}^{\tilde{\sigma}}})=\mathbb{P}(Y_{2}=\tilde{\sigma}^{\mathcal{D}_{k}^{\tilde{\sigma}}}\mid Y_{1}=\tilde{\sigma})=\frac{\#\{\mathcal{D}_{k}^{\tilde{\sigma}}\}}{N}\kappa^{\#\{\mathcal{D}_{k}^{\tilde{\sigma}}\}-1}, \qquad 1\leq k\leq\tilde{\mathcal{D}}^{\tilde{\sigma}},
	\end{align*}
	and
	\begin{align*}
		P_{\text{W}}(\tilde{\sigma},\tilde{\sigma}^{\mathcal{C}_{k}^{\tilde{\sigma}}})=\mathbb{P}(Y_{2}=&\tilde{\sigma}^{\mathcal{C}_{k}^{\tilde{\sigma}}}\mid Y_{1}=\tilde{\sigma})=\frac{\#\{\mathcal{C}_{k}^{\tilde{\sigma}}\}}{N}\kappa^{\#\{\mathcal{C}_{k}^{\tilde{\sigma}}\}-1}, \qquad 1\leq k\leq\tilde{\mathcal{C}}^{\tilde{\sigma}}.
	\end{align*}
	Then for any $\tilde{\sigma}\neq +\textbf{1}$,
	\begin{align*}
		\mathbb{P}(Y_{\tilde{\mathcal{D}}^{\tilde{\sigma}}}=+\textbf{1}\mid Y_{1}=\tilde{\sigma})\geq\prod_{k=1}^{\tilde{\mathcal{D}}^{\tilde{\sigma}}}\left(\frac{\#\{\mathcal{D}_{k}^{\tilde{\sigma}}\}}{N}\kappa^{\#\{\mathcal{D}_{k}^{\tilde{\sigma}}\}-1}\right)>0.
	\end{align*}
	Similarly,
	\begin{align*}
		\mathbb{P}(Y_{\tilde{\mathcal{D}}^{\tilde{\sigma}}}=\tilde{\sigma}\mid Y_{1}=+\textbf{1})\geq\prod_{k=1}^{\tilde{\mathcal{D}}^{\tilde{\sigma}}}\left(\frac{\#\{\mathcal{D}_{k}^{\tilde{\sigma}}\}}{N}\kappa^{\#\{\mathcal{D}_{k}^{\tilde{\sigma}}\}-1}\hat{\kappa}^{2}\right)>0.
	\end{align*}
	Note that
	\begin{align*}
		\mathbb{P}(Y_{3}=+\textbf{1}\mid Y_{1}=+\textbf{1})\geq&\mathbb{P}(Y_{3}=+\textbf{1}\mid Y_{2}=-\textbf{1})\mathbb{P}(Y_{2}=-\textbf{1}\mid Y_{1}=+\textbf{1})\\
		=&P_{\text{W}}(+\textbf{1},-\textbf{1})P_{\text{W}}(-\textbf{1},+\textbf{1})=(N\hat{\kappa}+\kappa)^{2}\kappa^{2N-2}>0,
	\end{align*}
	and
	\begin{align*}
		\mathbb{P}(Y_{4}=+\textbf{1}\mid Y_{1}=+\textbf{1})\geq&P_{\text{W}}(+\textbf{1},(+\textbf{1})^{\{2\}})P_{\text{W}}((+\textbf{1})^{\{2\}},-\textbf{1})P_{\text{W}}(-\textbf{1},+\textbf{1})\\
		=&\frac{1}{N}\hat{\kappa}^{2}\frac{N-1}{N}\kappa^{N-2}(N\hat{\kappa}+\kappa)\kappa^{N-1}>0.
	\end{align*}	
	Hence, $(Y_k)_{k\in\mathbb{Z}^{+}}$ is an irreducible and aperiodic Markov chain on $\Omega$ for $\hat{J}\in(0,+\infty)$.
	
	For $\hat{J}=0$, Equation~\eqref{eq:transition0}~gives
	\begin{align*}
		P_{\text{W}}(\sigma,\sigma^{i})
		=\frac{1}{N},\qquad i\in[N],
	\end{align*}
	which implies $(Y_k)_{k\in\mathbb{Z}^{+}}$ is an irreducible Markov chain on $\Omega$ at $\hat{J}=0$. Since $\Omega$ is finite, $(Y_k)_{k\in\mathbb{Z}^{+}}$ is positive recurrent for all $\hat{J}\in[0,+\infty)$. Therefore, by Theorem C.1 in \cite{Levin2017document},
	\begin{align*}
		\frac{1}{M}\sum_{k=1}^{M}g(Y_{k})\rightarrow E_{\mu_{N}}[g],\qquad \mathbb{P}\text{-a.s.},
	\end{align*}
	where $g$ is a real-valued function defined on $\Omega$.
\end{proof}
In contrast, for $\hat{J}=+\infty$, the Wolff dynamics converges almost surely to the set $\{-\textbf{1},+\textbf{1}\}$. Starting from any initial configuration $Y_{1}\in\Omega$, the Markov chain reaches one of these configurations in finitely many steps and subsequently oscillates deterministically between them. Moreover, the first hitting time of the set $\{-\textbf{1},+\textbf{1}\}$ can be characterized explicitly in terms of the number of connected components in the decomposition of the initial configuration $Y_{1}$.
\begin{theorem}[Dynamics at $\hat{J}=+\infty$]\label{thm:infiniteJ}
	For $\hat{J}=+\infty$ (critical point), the Wolff dynamics reaches the set $\{-\textbf{1},+\textbf{1}\}$ in finitely many steps almost surely. Moreover, once the chain enters $\{-\textbf{1},+\textbf{1}\}$, it remains in this set thereafter. More precisely, if the initial configuration $Y_{1}\in\{-\textbf{1},+\textbf{1}\}$, then for any $k\geq 1$
	\begin{equation*}
		Y_{k}=
		\begin{cases}
			Y_{1}, & k \ \text{is odd},\\
			-Y_{1}, & k \ \text{is even}.
		\end{cases}
	\end{equation*}
	Furthermore, for any initial configuration $\tilde{\sigma}\in\Omega$ with $\tilde{\sigma}\notin\{-\textbf{1},+\textbf{1}\}$, we have
	\begin{align*}
		\mathbb{P}(Y_{\tilde{\mathcal{C}}^{\tilde{\sigma}}}\in\{-\textbf{1},+\textbf{1}\}\mid Y_{1}=\tilde{\sigma})=1,
	\end{align*}
	and consequently, for any $k\geq 0$,
	\begin{align*}
		\mathbb{P}(Y_{\tilde{\mathcal{C}}^{\tilde{\sigma}}+k}\in\{-\textbf{1},+\textbf{1}\}\mid Y_{1}=\tilde{\sigma})=1.
	\end{align*}	
\end{theorem}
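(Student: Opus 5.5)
The plan is to make the kernel explicit at $\hat{J}=+\infty$ and then track a single integer invariant, the number of connected components, which turns out to evolve deterministically. At $\hat{J}=+\infty$ the bond probability in step 2 of the Wolff dynamics is $\tilde p_{i_0 j}=1-e^{-2\hat J}=1$ for aligned neighbours and $0$ otherwise. Equivalently, in the formulas of Theorem~\ref{thm: transitionmatrix} we substitute $\kappa=1$ and $\hat\kappa=0$. I will take this substitution as the definition of $P_{\text{W}}$ at the critical point. Then every term carrying a factor $\hat\kappa$ vanishes, and the cluster grown from a seed $i_0$ is always the full connected component of $\mathcal C^\sigma$ or $\mathcal D^\sigma$ that contains $i_0$.

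\textbf{Step 1: the aligned configurations.} For $\sigma\in\{-\mathbf 1,+\mathbf 1\}$ the last line of \eqref{eq:wolffedgecluster} gives
\[
P_{\text{W}}(\sigma,-\sigma)=(N\hat\kappa+\kappa)\kappa^{N-1}=1 .
\]
Hence $Y_{k+1}=-Y_k$ almost surely once $Y_k\in\{-\mathbf 1,+\mathbf 1\}$. Induction on $k$ yields both the alternating formula for $Y_k$ and the claim that the chain never leaves this set once it enters.

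\textbf{Step 2: the invariant for non-aligned configurations.} Let $\tilde\sigma\notin\{-\mathbf 1,+\mathbf 1\}$. By the trichotomy stated before Theorem~\ref{thm: transitionmatrix}, $\tilde{\mathcal C}^{\tilde\sigma}=\tilde{\mathcal D}^{\tilde\sigma}=:m\ge 1$. Around the ring, the $2m$ arcs alternate in sign.
\begin{itemize}
\item By \eqref{eq:wolfftransition} with $\kappa=1$, $\hat\kappa=0$, the only moves of positive probability are flips of an entire component $\mathcal K^{\tilde\sigma}_r$. Such a move has probability $\#\{\mathcal K_r^{\tilde\sigma}\}/N$, and these probabilities sum to $1$ over all components.
\item Suppose $m\ge2$. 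The flipped arc has two neighbouring arcs, which are distinct and of the opposite sign. After the flip, these three arcs merge into one.
\item The arc count therefore drops from $2m$ to $2m-2$, so the new configuration has $m-1$ components of each sign, whichever component was chosen.
\item Suppose $m=1$. Flipping either arc produces a constant configuration.
\end{itemize}
Consequently the component count decreases by exactly one at every step, deterministically. The configuration lies in $\{-\mathbf 1,+\mathbf 1\}$ after exactly $\tilde{\mathcal C}^{\tilde\sigma}$ transitions, with probability one. Combined with Step~1, this gives the statement for all later times $\tilde{\mathcal C}^{\tilde\sigma}+k$.

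\textbf{Main obstacle.} The delicate point is the bookkeeping, not the dynamics.
\begin{itemize}
\item \emph{The merge claim.} One must check that flipping a component really merges it with both neighbours into a single arc of the ring, and that the two neighbours are distinct when $m\ge2$. This uses the periodic boundary conditions and the alternation of signs.
\item \emph{The time index.} The chain starts at $Y_1$, so "after $\tilde{\mathcal C}^{\tilde\sigma}$ transitions" is literally $Y_{\tilde{\mathcal C}^{\tilde\sigma}+1}$. For example, if $m=1$ then $Y_1=\tilde\sigma$ is not aligned, while $Y_2$ is.
\end{itemize}
I expect the indexing discrepancy to be the main issue. My reading is that the statement intends $\tilde{\mathcal C}^{\tilde\sigma}$ to count the number of Wolff updates. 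I would therefore prove the statement in the form "aligned after $\tilde{\mathcal C}^{\tilde\sigma}$ updates", i.e. $\mathbb P(Y_{\tilde{\mathcal C}^{\tilde\sigma}+1}\in\{-\mathbf 1,+\mathbf 1\}\mid Y_1=\tilde\sigma)=1$, and state this convention explicitly. With that convention, the "for any $k\ge0$" part follows from Step~1.
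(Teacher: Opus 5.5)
Your proof is correct and follows the paper's route: the chain alternates deterministically on $\{-\mathbf{1},+\mathbf{1}\}$, and at $\hat{J}=+\infty$ each move flips a whole component and lowers $\tilde{\mathcal{C}}$ by exactly one. Your three-arc merge argument supplies the justification for this decrease, which the paper only asserts.

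Your indexing objection is also right, since for $\tilde{\mathcal{C}}^{\tilde{\sigma}}=1$ the literal claim would say that $Y_{1}=\tilde{\sigma}$ is already aligned. The paper's Step~3 loses a step when it asserts that one flip from a configuration in $\mathcal{O}^{\tilde{\sigma}}_{\tilde{\mathcal{C}}^{\tilde{\sigma}}-2}$ (which has two components of each sign) reaches $\{-\mathbf{1},+\mathbf{1}\}$. The corrected time $\tilde{\mathcal{C}}^{\tilde{\sigma}}+1$ is exactly what Theorem~\ref{thm:fixedpoint} uses, since it only invokes $Y_{\tilde{\mathcal{C}}^{\tilde{\sigma}}+k}$ with $k\geq 1$.
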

\begin{proof}
	Recall that for $\hat{J}=+\infty$, the probability in step 2 of the Wolff dynamics satisfies $\tilde{p}_{ij}=1$ for any $(i,j)\in \mathcal{G}$. Hence, once a site $i$ is selected, the cluster constructed in the Wolff dynamics coincides with the connected component containing $i$.
	
	\textbf{Step 1: Deterministic dynamics on fully aligned configurations.} For the initial configuration $Y_{1}\in\{-\textbf{1},+\textbf{1}\}$, the vertex set of the cluster grown from any initial site $i\in[N]$ is always the entire set $[N]$. Consequently, for any $k\geq 1$
	\begin{align*}
		\mathbb{P}(Y_{k+1}=-\textbf{1}\mid Y_{k}=+\textbf{1})=1,\quad \mathbb{P}(Y_{k+1}=+\textbf{1}\mid Y_{k}=-\textbf{1})=1.
	\end{align*}
	Thus, for the initial configuration $Y_{1}=+\textbf{1}$,
	\begin{equation*}
		Y_{k}=\left\{\begin{array}{ll}
			+\textbf{1},& \qquad k \ \text{is odd},  \\
			-\textbf{1},& \qquad k \ \text{is even},
		\end{array}\right.
	\end{equation*}
	and for the initial configuration $Y_{1}=-\textbf{1}$,
	\begin{equation*}
		Y_{k}=\left\{\begin{array}{ll}
			-\textbf{1},& \qquad k \ \text{is odd},  \\
			+\textbf{1},& \qquad k \ \text{is even} .
		\end{array}\right.
	\end{equation*}
	Hence the chain alternates deterministically between $+\textbf{1}$ and $-\textbf{1}$.
	
	\textbf{Step 2: One-step reduction of connected components.}
	If the initial configuration $Y_{1}=\tilde{\sigma}\in\Omega$ satisfies $\tilde{\sigma}\notin\{-\textbf{1},+\textbf{1}\}$, then we have $\tilde{\mathcal{C}}^{\tilde{\sigma}}=\tilde{\mathcal{D}}^{\tilde{\sigma}}\neq 0$ in the decomposition~\eqref{eq:decom}~and
	\begin{align*}
		\mathcal{C}^{\tilde{\sigma}}=\bigcup_{r=1}^{\tilde{\mathcal{C}}^{\tilde{\sigma}}}\mathcal{C}_{r}^{\tilde{\sigma}},\qquad 	\mathcal{D}^{\tilde{\sigma}}=\bigcup_{r=1}^{\tilde{\mathcal{C}}^{\tilde{\sigma}}}\mathcal{D}_{r}^{\tilde{\sigma}}.
	\end{align*}
	Because $\hat{J}=+\infty$, the vertex set of the cluster grown from any initial site $i\in\mathcal{C}_{r}^{\tilde{\sigma}}$ (resp. $\mathcal{D}_{r}^{\tilde{\sigma}}$) is exactly the entire component $\mathcal{C}_{r}^{\tilde{\sigma}}$ (resp. $\mathcal{D}_{r}^{\tilde{\sigma}}$). Therefore, 
	\begin{align*}
		\sum_{r=1}^{\tilde{\mathcal{C}}^{\tilde{\sigma}}}\left(\mathbb{P}(Y_{2}=\tilde{\sigma}^{\mathcal{C}_{r}^{\tilde{\sigma}}}\mid Y_{1}=\tilde{\sigma})+ \mathbb{P}(Y_{2}=\tilde{\sigma}^{\mathcal{D}_{r}^{\tilde{\sigma}}}\mid Y_{1}=\tilde{\sigma})\right)=1.
	\end{align*}
	Define
	\begin{align*}
		\mathcal{O}_{1}^{\tilde{\sigma}}:=\bigcup_{r=1}^{\tilde{\mathcal{C}}^{\tilde{\sigma}}}\big(\{\eta\in\Omega\mid\eta=\tilde{\sigma}^{\mathcal{C}_{r}^{\tilde{\sigma}}}\}\cup\{\eta\in\Omega\mid\eta=\tilde{\sigma}^{\mathcal{D}_{r}^{\tilde{\sigma}}}\}\big).
	\end{align*}
	Then 
	\begin{align*}
		\mathbb{P}(Y_{2}\in\mathcal{O}_{1}^{\tilde{\sigma}}\mid Y_{1}=\tilde{\sigma})=1,
	\end{align*}
	where for any $A\subset\Omega$
	\begin{align*}
		\mathbb{P}(Y_{2}\in A\mid Y_{1}=\tilde{\sigma}):=\sum_{\eta\in A}\mathbb{P}(Y_{2}=\eta\mid Y_{1}=\tilde{\sigma}).
	\end{align*}
	
	\textbf{Step 3: Iteration.}
	For a configuration $\eta\in\mathcal{O}_{1}^{\tilde{\sigma}}$, $\mathcal{C}^{\eta}$ and $\mathcal{D}^{\eta}$
	can also be decomposed into a union of disjoint connected components
	\begin{align*}
		\mathcal{C}^{\eta}=\bigcup_{r=1}^{\tilde{\mathcal{C}}^{\eta}}\mathcal{C}_{r}^{\eta},\qquad 	\mathcal{D}^{\eta}=\bigcup_{r=1}^{\tilde{\mathcal{D}}^{\eta}}\mathcal{D}_{r}^{\eta},
	\end{align*}
	and $\tilde{\mathcal{C}}^{\eta}=\tilde{\mathcal{D}}^{\eta}=\tilde{\mathcal{C}}^{\tilde{\sigma}}-1$ for all $\eta\in\mathcal{O}_{1}^{\tilde{\sigma}}$. Similarly, for all $\eta\in\mathcal{O}_{1}^{\tilde{\sigma}}$, we have
	\begin{align*}
		\sum_{r=1}^{\tilde{\mathcal{C}}^{\tilde{\sigma}}-1}\left(\mathbb{P}(Y_{3}=\eta^{\mathcal{C}_{r}^{\eta}}\mid Y_{2}=\eta)+ \mathbb{P}(Y_{3}=\eta^{\mathcal{D}_{r}^{\eta}}\mid Y_{2}=\eta)\right)=1.
	\end{align*}
	Define 
	\begin{align*}
		\mathcal{O}_{2}^{\tilde{\sigma}}:=\bigcup_{\eta\in\mathcal{O}_{1}^{\tilde{\sigma}}}\bigcup_{r=1}^{\tilde{\mathcal{C}}^{\tilde{\sigma}}-1}\big(\{\xi\in\Omega\mid\xi=\eta^{\mathcal{C}_{r}^{\eta}}\}\cup\{\xi\in\Omega\mid\xi=\eta^{\mathcal{D}_{r}^{\eta}}\}\big).
	\end{align*}
	Then
	\begin{align*}
		\mathbb{P}(Y_{3}\in\mathcal{O}_{2}^{\tilde{\sigma}}\mid Y_{2}=\eta)=1,
	\end{align*} 
	for all $\eta\in\mathcal{O}_{1}^{\tilde{\sigma}}$, and by the Markov property
	\begin{align*}
		\mathbb{P}(Y_{3}\in\mathcal{O}_{2}^{\tilde{\sigma}}\mid Y_{1}=\tilde{\sigma})=&\sum_{\eta\in\Omega} \mathbb{P}(Y_{3}\in\mathcal{O}_{2}^{\tilde{\sigma}}\mid Y_{2}=\eta,Y_{1}=\tilde{\sigma})\mathbb{P}(Y_{2}=\eta\mid Y_{1}=\tilde{\sigma})\\
		=&\sum_{\eta\in\mathcal{O}_{1}^{\tilde{\sigma}}} \mathbb{P}(Y_{3}\in\mathcal{O}_{2}^{\tilde{\sigma}}\mid Y_{2}=\eta)\mathbb{P}(Y_{2}=\eta\mid Y_{1}=\tilde{\sigma})\\
		=&\mathbb{P}(Y_{2}\in\mathcal{O}_{1}^{\tilde{\sigma}}\mid Y_{1}=\tilde{\sigma})=1.
	\end{align*}
	For a configuration $\xi\in\mathcal{O}_{2}^{\tilde{\sigma}}$, $\mathcal{C}^{\xi}$ and $\mathcal{D}^{\xi}$
	can also be decomposed into the union of disjoint connected components as above and $\tilde{\mathcal{C}}^{\xi}=\tilde{\mathcal{D}}^{\xi}=\tilde{\mathcal{C}}^{\tilde{\sigma}}-2$ for all $\xi\in\mathcal{O}_{2}^{\tilde{\sigma}}$. 
	
	Iterating the above argument until the condition $\tilde{\mathcal{C}}^{\zeta}=\tilde{\mathcal{D}}^{\zeta}=1$  holds for all $\zeta\in\mathcal{O}_{\tilde{\mathcal{C}}^{\tilde{\sigma}}-1}^{\tilde{\sigma}}$, where
	\begin{align*}
		\mathcal{O}_{\tilde{\mathcal{C}}^{\tilde{\sigma}}-1}^{\tilde{\sigma}}:=\bigcup_{\vartheta\in\mathcal{O}_{\tilde{\mathcal{C}}^{\tilde{\sigma}}-2}^{\tilde{\sigma}}}\big(\{\zeta\in\Omega\mid\zeta=\vartheta^{\mathcal{C}_{1}^{\vartheta}}\}\cup\{\zeta\in\Omega\mid\zeta=\vartheta^{\mathcal{D}_{1}^{\vartheta}}\}\big),
	\end{align*}
	and 
	\begin{align*}
		\vartheta^{\mathcal{C}_{1}^{\vartheta}}=-\textbf{1},\qquad\vartheta^{\mathcal{D}_{1}^{\vartheta}}=+\textbf{1}.
	\end{align*}
	Then we have
	\begin{align*}
		\mathbb{P}(Y_{\tilde{\mathcal{C}}^{\tilde{\sigma}}-1}\in\mathcal{O}_{\tilde{\mathcal{C}}^{\tilde{\sigma}}-2}^{\tilde{\sigma}}\mid Y_{1}=\tilde{\sigma})=1,
	\end{align*}
	and for any $\varsigma\in\mathcal{O}_{\tilde{\mathcal{C}}^{\tilde{\sigma}}-2}^{\tilde{\sigma}}$,
	\begin{align*}
		\mathbb{P}(Y_{\tilde{\mathcal{C}}^{\tilde{\sigma}}}=+\textbf{1}\mid Y_{\tilde{\mathcal{C}}^{\tilde{\sigma}}-1}=\varsigma)+ \mathbb{P}(Y_{\tilde{\mathcal{C}}^{\tilde{\sigma}}}=-\textbf{1}\mid Y_{\tilde{\mathcal{C}}^{\tilde{\sigma}}-1}=\varsigma)=1.
	\end{align*}
	Consequently,
	\begin{align*}
		\mathbb{P}(Y_{\tilde{\mathcal{C}}^{\tilde{\sigma}}}=+\textbf{1}\mid Y_{1}=\tilde{\sigma})+\mathbb{P}(Y_{\tilde{\mathcal{C}}^{\tilde{\sigma}}}=-\textbf{1}\mid Y_{1}=\tilde{\sigma})=1.
	\end{align*}
	This completes the proof.
\end{proof}
\begin{remark}\label{rem:glauberfiniteflip}
	The theorem shows that $\{-\textbf{1},+\textbf{1}\}$ can be viewed as a pair of globally attractive configurations for the Wolff dynamics at $\hat{J}=+\infty$. Once this set is reached, the dynamics becomes deterministic and alternates between the two configurations, thus forming a periodic orbit of period $2$. Moreover, the hitting time depends explicitly on the number of connected components in the decomposition of the initial configuration. In contrast, such behavior is absent for the Glauber dynamics at $\hat{J}=+\infty$.
\end{remark}

\subsection{The log-Sobolev inequality for the Wolff dynamics}\label{dec:logsobolevwolff}
The log-Sobolev inequality plays a central role in the analysis of spin models. On the one hand, it is a key tool in establishing the uniqueness of the infinite-volume Gibbs measures for such models (see e.g. \cite{Menz2014document1,Bauerschmidt2021,Bauerschmidt2024document1}). On the other hand, it implies exponential convergence of the associated dynamics to equilibrium, with a rate governed by the log-Sobolev constant, and a detailed discussion can be found in \cite{Royer2007document}. In this section, we establish the log-Sobolev inequality as well as the Poincar\'e inequality for the Wolff dynamics in the entire subcritical regime $\hat{J}\in[0,+\infty)$ of the 1D Ising model. Our approach combines a comparison argument with the work of 
Bauerschmidt and Dagallier \cite{Bauerschmidt2024document} on the 
log-Sobolev inequality for the dynamics whose Dirichlet form coincides 
with Equation (3.6) in \cite{Martinelli1997document}. These results yield quantitative estimates on the ergodic averages for the Wolff dynamics.
\begin{theorem}\label{thm:logsobolev}
	Let $(Y_k)_{k\in\mathbb{Z}^+}$ be the Wolff dynamics with invariant measure $\mu_N$. 
	For any $\hat{J}\in[0,+\infty)$, the log-Sobolev inequality
	\begin{align}\label{eq:logsobolev} 
		E_{\mu_{N}}[f^{2}\log f^{2}]-E_{\mu_{N}}[f^{2}]\log E_{\mu_{N}}[f^{2}]\leq&e^{2\hat{J}}(e^{4\hat{J}}+1)\big(\frac{1}{2}+\hat{J}e^{\frac{e^{2\hat{J}}-1}{2}}\big)N	\mathcal{E}^{\text{W}}_{\mu_{N}}(f),
	\end{align}
	holds for all functions $f:\Omega\to\mathbb{R}$, where
	\begin{align*}	
		\mathcal{E}^{\text{W}}_{\mu_{N}}(f):=\frac{1}{2}\sum_{\sigma,\eta\in\Omega}[f(\sigma)-f(\eta)]^{2}P_{\text{W}}(\eta,\sigma)\mu_{N}(\eta).
	\end{align*}
	%
\end{theorem}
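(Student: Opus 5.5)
We argue by comparison with a single-site dynamics, as announced before the statement. The constant splits as $e^{2\hat{J}}(e^{4\hat{J}}+1)\cdot N\cdot\big(\tfrac12+\hat{J}e^{\frac{e^{2\hat{J}}-1}{2}}\big)$. We read the last factor as the log-Sobolev constant of Bauerschmidt and Dagallier \cite{Bauerschmidt2024document} for the 1D Ising model with the Dirichlet form of Equation (3.6) in \cite{Martinelli1997document}. The factor $N$ together with $e^{2\hat{J}}(e^{4\hat{J}}+1)$ should come from comparing that Dirichlet form with $\mathcal{E}^{\text{W}}_{\mu_N}$.

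\textbf{Step 1 (reference dynamics).} We take the heat-bath/Glauber-type form
\begin{align*}
\mathcal{E}^{\text{GD}}(f)=\sum_{i=1}^{N}E_{\mu_N}\big[\mathrm{Var}_{\mu_N}(f\mid\sigma_{j},j\neq i)\big]
=\frac12\sum_{\sigma}\mu_N(\sigma)\sum_{i=1}^N c_i(\sigma)\,[f(\sigma^{i})-f(\sigma)]^2 ,
\end{align*}
with $c_i(\sigma)=\mu_N(\sigma^i)/(\mu_N(\sigma)+\mu_N(\sigma^i))\le 1$. We invoke \cite{Bauerschmidt2024document}: for nearest-neighbour Ising on the cycle with coupling $\hat{J}$, the entropy is bounded by $\big(\tfrac12+\hat{J}e^{\frac{e^{2\hat{J}}-1}{2}}\big)$ times this form, up to the normalization convention. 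The main care here is to match their normalization, for instance $\mathrm{Ent}(f^2)$ versus $\mathrm{Ent}(f)$, and any factor $2$, with Definition~\ref{def:logsobolev}. We also need to check that their assumption (a spectral condition on the coupling matrix, or a polchinski/renormalization estimate) produces exactly this exponential factor for the 1D coupling. At $\hat{J}=0$ this reduces to the $\frac12$ of the product Bernoulli case.

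\textbf{Step 2 (comparison of Dirichlet forms).} Using only the single-site moves of the Wolff chain, we have $\mathcal{E}^{\text{W}}_{\mu_N}(f)\ge \frac12\sum_\sigma\mu_N(\sigma)\sum_i P_{\text{W}}(\sigma,\sigma^i)[f(\sigma^i)-f(\sigma)]^2$, since all terms are nonnegative. From Theorem~\ref{thm: transitionmatrix} with $A=\{i\}$, we get $P_{\text{W}}(\sigma,\sigma^{i})=\frac1N\hat\kappa^{m}$, where $m=\#\{\partial\{i\}\cap\mathcal{B}^\sigma_+\}\in\{0,1,2\}$. Hence $P_{\text{W}}(\sigma,\sigma^{i})\ge \frac1N e^{-4\hat{J}}$ always. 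This worst case occurs when site $i$ agrees with both neighbours; then $c_i(\sigma)=e^{-4\hat J}/(1+e^{-4\hat J})$, so the ratio $c_i/(NP_{\text{W}})$ is $1/(1+e^{-4\hat{J}})$. Checking the three cases $m=0,1,2$ gives a pointwise bound
\[
c_i(\sigma)\le \Lambda\, N\,P_{\text{W}}(\sigma,\sigma^i),\qquad \Lambda\le e^{2\hat{J}}(e^{4\hat{J}}+1),
\]
where the case $m=0$ (ratio $e^{4\hat J}/(1+e^{4\hat J})\cdot 1$) and the case $m=1$ (ratio $\frac12 e^{2\hat J}$) are dominated by the crude bound. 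I would aim to show that the crude bound $c_i\le1\le e^{2\hat J}(e^{4\hat J}+1)\cdot N P_{\text W}/\ldots$ reproduces the stated factor, possibly after symmetrizing via detailed balance (Theorem~\ref{thm: transitionmatrix}, part 2). This gives $\mathcal{E}^{\text{GD}}(f)\le e^{2\hat{J}}(e^{4\hat{J}}+1)N\,\mathcal{E}^{\text{W}}_{\mu_N}(f)$.

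\textbf{Step 3.} Chaining the two steps gives \eqref{eq:logsobolev}. We verify the case $\hat{J}=0$ separately: $P_{\text{W}}(\sigma,\sigma^i)=1/N$ by \eqref{eq:transition0}, and we recover Gross's constant for the hypercube. We also record that for this case the Glauber form is a lazy version (rates $\frac12$), which is the content of the equation referred to in Remark~\ref{rem:diffrentglauber}.

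The main obstacle is Step 1, namely extracting from \cite{Bauerschmidt2024document} a fully explicit constant for the 1D periodic Ising model in the right normalization. Step 2 is elementary bookkeeping.
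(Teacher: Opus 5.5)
Your architecture is the paper's: keep only the single-site moves of $P_{\text{W}}$, compare with Glauber, and invoke Bauerschmidt--Dagallier. However, the way you allocate the constant leaves a genuine gap in Step 1.

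The theorem of \cite{Bauerschmidt2024document}, as the paper applies it, bounds $\mathrm{Ent}_{\mu_N}(f^2)$ by $\big(\frac12+\int_0^{\hat J}e^{\int_0^t\chi_s\,ds}dt\big)\sum_{i=1}^N E_{\mu_N}[(f(\eta^i)-f(\eta))^2]$. That is the form with rate one for every flip, not your heat-bath form $\frac12\sum_\sigma\mu_N(\sigma)\sum_i c_i(\sigma)[f(\sigma^i)-f(\sigma)]^2$. The two differ by more than a normalization convention. Since $(1+e^{4\hat J})^{-1}\le c_i(\sigma)\le 1$, passing from the rate-one form to the heat-bath form costs the $\hat J$-dependent factor $2(1+e^{4\hat J})$. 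This is exactly where the factor $e^{4\hat J}+1$ in the statement comes from. Your Step 1 therefore asserts an inequality that does not follow from the cited result; it is stronger by up to that factor.

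Conversely, Step 2 does not need that factor. Your own case analysis gives $\max_{\sigma,i} c_i(\sigma)/(NP_{\text{W}}(\sigma,\sigma^i))=\frac12e^{2\hat J}$, attained at $m=1$. The case $m=2$ is not the worst, because $(1+e^{-4\hat J})^{-1}\le\frac12e^{2\hat J}$. This is precisely the paper's pointwise bound $P^{\text{GD}}(\sigma,\sigma^i)\le\frac12e^{2\hat J}P_{\text{W}}(\sigma,\sigma^i)$. The slack you build into Step 2 through the ``crude bound, possibly after symmetrizing'' only compensates for the overstatement in Step 1. The correct chain, which is the paper's, is
\[
\sum_{i=1}^N E_{\mu_N}\big[(f(\eta^i)-f(\eta))^2\big]\le 2(1+e^{4\hat J})\,\mathcal{E}^{\mathrm{HB}}(f)\le e^{2\hat J}(1+e^{4\hat J})N\,\mathcal{E}^{\text{W}}_{\mu_N}(f),
\]
where $\mathcal{E}^{\mathrm{HB}}$ is your heat-bath form, equal to $N\mathcal{E}^{\text{GD}}_{\mu_N}$ in the paper's normalization.

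The second missing ingredient is the explicit factor $\frac12+\hat J e^{\frac{e^{2\hat J}-1}{2}}$. The cited theorem does not hand you this number for the 1D model. It gives $\frac12+\int_0^{\hat J}e^{\int_0^t\chi_s\,ds}dt$, with $\chi_s=\sup_i\sum_j E_{\mu_N}[\sigma_i\sigma_j]$ at coupling $s$. What you flagged as something still to be checked (``a spectral condition \dots\ or a renormalization estimate'') is in fact a short susceptibility computation, done as follows:
\begin{itemize}
\item Use the exact transfer-matrix two-point function $E_{\mu_N}[\sigma_i\sigma_j]=(\boldsymbol{\theta}^{j-i}+\boldsymbol{\theta}^{N-j+i})/(1+\boldsymbol{\theta}^N)$ with $\boldsymbol{\theta}=\tanh s$.
\item Summing the geometric series gives $\chi_s\le(1+\boldsymbol{\theta})/(1-\boldsymbol{\theta})=e^{2s}$, uniformly in $N$.
\item Hence $\int_0^t\chi_s\,ds\le\frac{e^{2t}-1}{2}$, and by monotonicity $\int_0^{\hat J}e^{\frac{e^{2t}-1}{2}}dt\le\hat J e^{\frac{e^{2\hat J}-1}{2}}$.
\end{itemize}
With the factor $2(1+e^{4\hat J})$ moved into Step 1 and this susceptibility bound supplied, your argument becomes the paper's proof.
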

\begin{proof}
	We compare the Dirichlet form of the Wolff dynamics with that of the Glauber dynamics. According to Equation 3.12 in \cite{Levin2017document}, for the 1D Ising model, the transition probabilities of the Glauber dynamics are given by
	\begin{align*}
		P^{\text{GD}}(\sigma,\eta)=\frac{1}{N}\sum_{i=1}^{N}\frac{e^{\hat{J}\eta_{i}(\sigma_{i-1}+\sigma_{i+1})}}{e^{\hat{J}\eta_{i}(\sigma_{i-1}+\sigma_{i+1})}+e^{-\hat{J}\eta_{i}(\sigma_{i-1}+\sigma_{i+1})}}\mathbb{I}_{\{\eta_{j}=\sigma_{j}\ \text{for}\ j\neq i\}},
	\end{align*}
	where the notation $\mathbb{I}_{A}$ represents the indicator function of a set $A$. In particular, $P^{\mathrm{GD}}(\sigma,\sigma^A)=0$ unless $A=\{i\}$, and
	\begin{align}\label{eq:glaubertrans}
		P^{\text{GD}}(\sigma,\sigma^{i})=\frac{1}{N}\frac{e^{-\hat{J}\sigma_{i}(\sigma_{i-1}+\sigma_{i+1})}}{e^{\hat{J}(\sigma_{i-1}+\sigma_{i+1})}+e^{-\hat{J}(\sigma_{i-1}+\sigma_{i+1})}}.
	\end{align}
	Recalling the expression of $P_{\text{W}}(\sigma,\eta)$ in Equation~\eqref{eq:wolfftransition} for the Wolff dynamics, we have
	\begin{align}\label{eq:simglewolff}
		P_{\text{W}}(\sigma,\sigma^{i})
		=\frac{1}{N}
		\begin{cases}
			1, 
			& \exists r:\ \{i\}=\mathcal{K}_r^\sigma,\\[4pt]
			e^{-4\hat J}, 
			& \exists r:\ i\in\mathcal{K}_r^\sigma,\ 
			\#\{\hat{\partial}\mathcal{K}_r^\sigma\}=2,\
			i\notin \hat{\partial}\mathcal{K}_r^\sigma,\\[4pt]
			e^{-2\hat J}, 
			& \exists r:\ i\in\mathcal{K}_r^\sigma,\ 
			\#\{\hat{\partial}\mathcal{K}_r^\sigma\}=2,\
			i\in \hat{\partial}\mathcal{K}_r^\sigma,\\[4pt]
			e^{-4\hat J}, 
			& \tilde{\mathcal C}^\sigma\tilde{\mathcal D}^\sigma=0,
		\end{cases}
	\end{align}	
	where
	\begin{align*}
		\mathcal{K}_r^\sigma\in\{\mathcal{C}_{r}^{\sigma},\mathcal{D}_{r}^{\sigma}\}.
	\end{align*}
	For comparison with the Wolff dynamics, we rewrite the transition probabilities of Glauber dynamics in terms of the cluster decomposition introduced above. By Equation~\eqref{eq:glaubertrans}, we have
	\begin{align}\label{eq:glauberinfinite}
		P^{\mathrm{GD}}(\sigma,\sigma^{i})
		=\frac{1}{N}
		\begin{cases}
			\frac{e^{2\hat{J}}}{e^{2\hat{J}}+e^{-2\hat{J}}}, 
			& \exists r:\ \{i\}=\mathcal{K}_r^\sigma,\\[4pt]
			\frac{e^{-2\hat{J}}}{e^{2\hat{J}}+e^{-2\hat{J}}}, 
			& \exists r:\ i\in\mathcal{K}_r^\sigma,\ 
			\#\{\hat{\partial}\mathcal{K}_r^\sigma\}=2,\
			i\notin \hat{\partial}\mathcal{K}_r^\sigma,\\[4pt]
			\frac{1}{2}, 
			& \exists r:\ i\in\mathcal{K}_r^\sigma,\ 
			\#\{\hat{\partial}\mathcal{K}_r^\sigma\}=2,\
			i\in \hat{\partial}\mathcal{K}_r^\sigma,\\[4pt]
			\frac{e^{-2\hat{J}}}{e^{2\hat{J}}+e^{-2\hat{J}}}, 
			& \tilde{\mathcal C}^\sigma\tilde{\mathcal D}^\sigma=0.
		\end{cases}
	\end{align}	
	A case-by-case comparison of \eqref{eq:glauberinfinite} and \eqref{eq:simglewolff} shows that, for all $\sigma$ and $i$,
	\begin{align}\label{eq:comparisononsigle}	
		P^{\text{GD}}(\sigma,\sigma^{i})\leq\frac{1}{2}e^{2\hat{J}} P_{\text{W}}(\sigma,\sigma^{i}).
	\end{align}
	Since $P^{\text{GD}}(\sigma,\eta)=0$ unless $\eta=\sigma^i$ for some $i$, we have, for all $\sigma\neq\eta$,
	\begin{align*}	
		P^{\text{GD}}(\sigma,\eta)\leq \frac{1}{2}e^{2\hat{J}} P_{\text{W}}(\sigma,\eta).
	\end{align*}
	Therefore, restricting the Dirichlet form of the Wolff dynamics to single-site flips yields
	\begin{align}\label{eq:compariondicform}	
		2\mathcal{E}^{\text{W}}_{\mu_{N}}(f)=&\sum_{\sigma,\eta\in\Omega}[f(\sigma)-f(\eta)]^{2}P_{\text{W}}(\eta,\sigma)\mu_{N}(\eta)\nonumber\\
		=&\sum_{\substack{\sigma,\eta\in\Omega, \sigma\neq\eta}}[f(\sigma)-f(\eta)]^{2}P_{\text{W}}(\eta,\sigma)\mu_{N}(\eta)\nonumber\\
		\geq&\sum_{i=1}^{N}\sum_{\eta\in\Omega}[f(\eta^{i})-f(\eta)]^{2}P_{\text{W}}(\eta,\eta^{i})\mu_{N}(\eta)\\
		\geq&2e^{-2\hat{J}}\sum_{i=1}^{N}\sum_{\eta\in\Omega}[f(\eta^{i})-f(\eta)]^{2}P^{\text{GD}}(\eta,\eta^{i})\mu_{N}(\eta)\nonumber\\
		=&2e^{-2\hat{J}}\sum_{\sigma,\eta\in\Omega}[f(\sigma)-f(\eta)]^{2}P^{\text{GD}}(\eta,\sigma)\mu_{N}(\eta)=4e^{-2\hat{J}}	\mathcal{E}^{\text{GD}}_{\mu_{N}}(f),\nonumber
	\end{align}
	where in the second inequality we used \eqref{eq:comparisononsigle}, and
	\begin{align*}	
		\mathcal{E}^{\text{GD}}_{\mu_{N}}(f):=\frac{1}{2}\sum_{\sigma,\eta\in\Omega}[f(\sigma)-f(\eta)]^{2}P^{\text{GD}}(\eta,\sigma)\mu_{N}(\eta),
	\end{align*}
	denotes the Dirichlet form associated with the Glauber dynamics. From Equations~\eqref{eq:glaubertrans}~and~\eqref{eq:glauberinfinite}, we have
	\begin{align*} 
		2\mathcal{E}^{\text{GD}}_{\mu_{N}}(f)=&\sum_{\sigma,\eta\in\Omega}[f(\sigma)-f(\eta)]^{2}P^{\text{\text{GD}}}(\eta,\sigma)\mu_{N}(\eta)\\
		=&\sum_{\eta\in\Omega}\sum_{i=1}^{N}[f(\eta^{i})-f(\eta)]^{2}P^{\text{\text{GD}}}(\eta,\eta^{i})\mu_{N}(\eta)\\
		=&\frac{1}{N}\sum_{\eta\in\Omega}\sum_{i=1}^{N}[f(\eta^{i})-f(\eta)]^{2}\frac{e^{-\hat{J}\eta_{i}(\eta_{i-1}+\eta_{i+1})}}{e^{\hat{J}(\eta_{i-1}+\eta_{i+1})}+e^{-\hat{J}(\eta_{i-1}+\eta_{i+1})}}\mu_{N}(\eta)\\
		\geq&\frac{1}{e^{4\hat{J}}+1}\frac{1}{N}\sum_{i=1}^{N}\sum_{\eta\in\Omega}[f(\eta^{i})-f(\eta)]^{2}\mu_{N}(\eta),
	\end{align*}
	where
	\begin{align*} 
		\sum_{i=1}^{N}\sum_{\eta\in\Omega}[f(\eta^{i})-f(\eta)]^{2}\mu_{N}(\eta)=\sum_{i=1}^{N}E_{\mu_{N}}[(f(\eta^{i})-f(\eta))^{2}],
	\end{align*}
	is the Dirichlet form used in \cite{Bauerschmidt2024document} (i.e., Equation (3.6) in \cite{Martinelli1997document}). By Theorem 1.1 in \cite{Bauerschmidt2024document}, we obtain the following log-Sobolev inequality
	\begin{align}\label{eq:rolandlogsobolev} 
		E_{\mu_{N}}[f^{2}\log f^{2}]-E_{\mu_{N}}[f^{2}]\log E_{\mu_{N}}[f^{2}]\leq&\big(\frac{1}{2}+\int_{0}^{\hat{J}}e^{\int_{0}^{t}\chi_{s}ds}dt\big)\sum_{i=1}^{N}E_{\mu_{N}}[(f(\eta)-f(\eta^{i}))^{2}]\nonumber\\
		\leq&e^{2\hat{J}}(e^{4\hat{J}}+1)\big(\frac{1}{2}+\int_{0}^{\hat{J}}e^{\int_{0}^{t}\chi_{s}ds}dt\big)N	\mathcal{E}^{\text{W}}_{\mu_{N}}(f),
	\end{align}
	where
	\begin{align*} 
		\chi_{\hat{J}}:=\sup_{i\in[N]}\sum_{j=1}^{N}E_{\mu_{N}}[\sigma_{i}\sigma_{j}].
	\end{align*}
	It remains to bound $\chi_{\hat{J}}$. Note that for $1\leq i<j\leq N$,
	\begin{align}\label{eq:1DIsingcorrela}	E_{\mu_{N}}[\sigma_{i}\sigma_{j}]=\frac{1}{Z_{N}}\sum_{\sigma_{1}=\pm 1}\cdots\sum_{\sigma_{N}=\pm 1}\sigma_{i}\sigma_{j}\exp\left(\hat{J}\sum_{l=1}^{N}\sigma_{l}\sigma_{l+1}\right)=\frac{\boldsymbol{\theta}^{j-i}+\boldsymbol{\theta}^{N-j+i}}{1+\boldsymbol{\theta}^{N}},
	\end{align}
	where the last equality follows from p.174 in \cite{Morandi2004document}. Therefore, for any $1\leq i\leq N$,
	\begin{align*}	\sum_{j=1}^{N}E_{\mu_{N}}[\sigma_{i}\sigma_{j}]&\leq 1+\sum_{j=i+1}^{N}\frac{\boldsymbol{\theta}^{j-i}+\boldsymbol{\theta}^{N-j+i}}{1+\boldsymbol{\theta}^{N}}+\sum_{j=1}^{i-1}\frac{\boldsymbol{\theta}^{i-j}+\boldsymbol{\theta}^{N-i+j}}{1+\boldsymbol{\theta}^{N}}.
	\end{align*}
	A direct computation shows that
	\begin{align*}
		&\sum_{j=i+1}^{N}\left(\boldsymbol{\theta}^{j-i}+\boldsymbol{\theta}^{N-j+i}\right)+\sum_{j=1}^{i-1}\left(\boldsymbol{\theta}^{i-j}+\boldsymbol{\theta}^{N-i+j}\right)\\
		=&\frac{\lambda_{+}}{\lambda_{+}-\lambda_{-}}\big(\boldsymbol{\theta}-\boldsymbol{\theta}^{N-i+1}+\boldsymbol{\theta}^{i}-\boldsymbol{\theta}^{N}+\boldsymbol{\theta}-\boldsymbol{\theta}^{i}+\boldsymbol{\theta}^{N-i+1}-\boldsymbol{\theta}^{N}\big)\\
		=&\frac{2\lambda_{+}}{\lambda_{+}-\lambda_{-}}(\boldsymbol{\theta}-\boldsymbol{\theta}^{N}).
	\end{align*}
	Hence,
	\begin{align}\label{eq:sumcorrelation}	\sum_{j=1}^{N}E_{\mu_{N}}[\sigma_{i}\sigma_{j}]&\leq 1+\frac{2\lambda_{+}}{\lambda_{+}-\lambda_{-}}\frac{\boldsymbol{\theta}-\boldsymbol{\theta}^{N}}{1+\boldsymbol{\theta}^{N}},
	\end{align}
	which implies
	\begin{equation}\label{eq:upperspeci} 
		\chi_{\hat{J}}\leq 1+\frac{2\lambda_{+}}{\lambda_{+}-\lambda_{-}}\frac{ \boldsymbol{\theta}- \boldsymbol{\theta}^{N}}{1+ \boldsymbol{\theta}^{N}}\leq 1+\frac{2\lambda_{+}}{\lambda_{+}-\lambda_{-}}\boldsymbol{\theta}=1+2\frac{\lambda_{-}}{\lambda_{+}-\lambda_{-}}=\frac{\lambda_{+}+\lambda_{-}}{\lambda_{+}-\lambda_{-}}=e^{2\hat{J}}.
	\end{equation}
	Putting Equation~\eqref{eq:upperspeci} into Equation~\eqref{eq:rolandlogsobolev} yields
	\begin{align*} 
		E_{\mu_{N}}[f^{2}\log f^{2}]-E_{\mu_{N}}[f^{2}]\log E_{\mu_{N}}[f^{2}]\leq&e^{2\hat{J}}(e^{4\hat{J}}+1)\big(\frac{1}{2}+\hat{J}e^{\frac{e^{2\hat{J}}-1}{2}}\big)N	\mathcal{E}^{\text{W}}_{\mu_{N}}(f).
	\end{align*}
\end{proof}
\begin{corollary}[\cite{Gross1975document}, Theorem 3 and \cite{Diaconis1996}, Example 3.2]\label{cor:logsobolevforrandonwaok}
	Let $(Y_{k})_{k\in\mathbb{Z}^{+}}$ and $\mu_{N}$ be defined as in Theorem \ref{thm: transitionmatrix}. At infinite temperature (i.e., $\hat{J}=0$), the Wolff dynamics reduces to the simple random walk on the hypercube $\Omega=\{-1,+1\}^{N}$, where at each step a single spin is chosen uniformly at random and flipped. Applying Theorem~\ref{thm:logsobolev} to the Markov chain with transition probability~\eqref{eq:transition0}, the log-Sobolev inequality
	\begin{align}\label{eq:logsobolevrandomwork} 
		E_{\mu_{N}}[f^{2}\log f^{2}]-E_{\mu_{N}}[f^{2}]\log E_{\mu_{N}}[f^{2}]\leq&N	\mathcal{E}^{\text{W},0}_{\mu_{N}}(f),
	\end{align}
	holds for all functions $f:\Omega\to\mathbb{R}$, where
	\begin{align*}	
		\mathcal{E}^{\text{W},0}_{\mu_{N}}(f):=\frac{1}{2N}\sum_{i=1}^{N}E_{\mu_{N}}\big[(f(\sigma)-f(\sigma^{i}))^{2}\big]=\frac{1}{2N}\sum_{i=1}^{N}(f(\sigma)-f(\sigma^{i}))^{2}\frac{1}{2^{N}}.
	\end{align*}
\end{corollary}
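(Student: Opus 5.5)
The plan is to specialise Theorem~\ref{thm:logsobolev} to $\hat{J}=0$ and then rewrite the Wolff Dirichlet form in the stated single-site form. We first evaluate the constant in \eqref{eq:logsobolev} at $\hat{J}=0$. We have $e^{2\hat{J}}=1$ and $e^{4\hat{J}}+1=2$. The bracket equals $\frac{1}{2}+0\cdot e^{0}=\frac{1}{2}$, since $\hat{J}=0$ kills the exponential term. The constant is therefore
\begin{align*}
1\cdot 2\cdot \tfrac{1}{2}\cdot N=N.
\end{align*}
It would be safer to re-check this directly in the proof of Theorem~\ref{thm:logsobolev} rather than just plugging in. The Bauerschmidt--Dagallier constant there is $\frac12+\int_0^{\hat J}(\cdots)\,dt$, which equals $\frac12$ at $\hat J=0$. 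The comparison chain then gives the factor $(e^{4\hat J}+1)\cdot e^{2\hat J}=2$. Both pieces are consistent with the value $N$.

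Next we identify $\mathcal{E}^{\text{W}}_{\mu_N}$ with $\mathcal{E}^{\text{W},0}_{\mu_N}$ at $\hat{J}=0$. At $\hat J=0$ the Gibbs measure \eqref{eq:isinggibbs} is uniform, $\mu_N(\sigma)=2^{-N}$. By \eqref{eq:transition0}, $P_{\text{W}}(\eta,\sigma)=\frac1N$ if $\sigma=\eta^i$ for some $i$, and $0$ otherwise. Substituting these into the definition of $\mathcal{E}^{\text{W}}_{\mu_N}$ gives
\begin{align*}
\mathcal{E}^{\text{W}}_{\mu_N}(f)=\frac12\sum_{\eta\in\Omega}\sum_{i=1}^N[f(\eta^i)-f(\eta)]^2\frac1N 2^{-N}=\frac{1}{2N}\sum_{i=1}^N E_{\mu_N}\big[(f(\sigma)-f(\sigma^i))^2\big],
\end{align*}
which is exactly $\mathcal{E}^{\text{W},0}_{\mu_N}(f)$. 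Combining this with the first step yields \eqref{eq:logsobolevrandomwork}.

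Finally, we check consistency with Gross and with Diaconis--Saloff-Coste. By tensorisation, the uniform measure on the hypercube satisfies
\begin{align*}
\mathrm{Ent}(f^2)\le \tfrac12\sum_i E\big[(f-f(\cdot^{i}))^2\big].
\end{align*}
Since $\mathcal{E}^{\text{W},0}=\frac{1}{2N}\sum_i E[\cdots]$, the right-hand side equals $N\,\mathcal{E}^{\text{W},0}$. This matches the constant $N$ and confirms that our constant is the sharp classical one.

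The main obstacle is making sure that the comparison inequality \eqref{eq:comparisononsigle} and the resulting chain \eqref{eq:compariondicform} lose nothing at $\hat{J}=0$. At $\hat J=0$ the Glauber dynamics has $P^{\text{GD}}(\sigma,\sigma^i)=\frac{1}{2N}$ and the Wolff dynamics has $P_{\text{W}}(\sigma,\sigma^i)=\frac1N$. The inequality \eqref{eq:comparisononsigle} with factor $\frac12 e^{0}=\frac12$ therefore holds with equality. The lower bound $\frac{1}{e^{4\hat J}+1}=\frac12$ on the Glauber rates is also attained. Hence every inequality in the chain is an equality, and the constant $N$ is not merely an upper bound but coincides with the classical one.
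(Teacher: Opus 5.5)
Your proposal is correct and follows the paper's route: you evaluate the constant of Theorem~\ref{thm:logsobolev} at $\hat J=0$ to get $N$, then identify $\mathcal{E}^{\text{W}}_{\mu_N}$ with $\mathcal{E}^{\text{W},0}_{\mu_N}$ using the uniform Gibbs measure and \eqref{eq:transition0}. One small caveat on your closing remarks, which the corollary does not need: equality in the comparison steps only shows nothing is lost there, while optimality of $N$ comes from the sharpness of Gross's two-point inequality (the test function $f=1+\epsilon\sigma_1$ makes both sides equal to $2\epsilon^2$ to leading order).
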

\begin{remark}
	As shown by Gross in Theorem 3 of \cite{Gross1975document}, the random walk on the two point space $\{-1,+1\}$ satisfies a log-Sobolev inequality with constant $C_{\text{LS}}=1$. Diaconis and Saloff-Coste extended Gross's result to the random walk on the hypercube $\Omega=\{-1,+1\}^{N}$ in Example 3.2 of \cite{Diaconis1996}, where they established a log-Sobolev inequality with constant $C_{\text{LS}}=N$. 
	
	In the special case $\hat{J}=0$, 
	Corollary~\ref{cor:logsobolevforrandonwaok} recovers these classical results. More generally, for $\hat{J}>0$, Theorem~\ref{thm:logsobolev}
	provides an upper bound on the log-Sobolev constant of the Wolff dynamics that grows at most linearly in the dimension $N$, up to a multiplicative factor depending on $\hat{J}$.
\end{remark}
The Poincar\'e inequality follows from Theorem~\ref{thm:logsobolev} via the standard implication that a log-Sobolev inequality implies a Poincar\'e inequality.
\begin{corollary}[The Poincar\'e inequality]\label{eq:Poincare}
	Let $(Y_{k})_{k\in\mathbb{Z}^{+}}$ and $\mu_{N}$ be defined as in Theorem \ref{thm: transitionmatrix}. For the coupling constant $\hat{J}\in [0,+\infty)$, the following Poincar\'e inequality holds 
	\begin{align*} 
		E_{\mu_{N}}[f^{2}]-(E_{\mu_{N}}[f])^{2}\leq&\frac{e^{2\hat{J}}(e^{4\hat{J}}+1)}{2}\big(\frac{1}{2}+\hat{J}e^{\frac{e^{2\hat{J}}-1}{2}}\big)N	\mathcal{E}^{\text{W}}_{\mu_{N}}(f),
	\end{align*}
	where $f$ is any real-valued function defined on $\Omega$.
\end{corollary}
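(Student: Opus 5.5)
The Poincaré inequality will be obtained by linearizing the log-Sobolev inequality of Theorem~\ref{thm:logsobolev}, which is exactly the relation $2C_{\text{PI}}\leq C_{\text{LS}}$ recorded in Remark~\ref{rem:lsirelatepoin}. Set $C_{\text{LS}}:=e^{2\hat{J}}(e^{4\hat{J}}+1)\big(\frac{1}{2}+\hat{J}e^{\frac{e^{2\hat{J}}-1}{2}}\big)N$. Theorem~\ref{thm:logsobolev} is stated as $\mathrm{Ent}_{\mu_N}(f^2)\leq C_{\text{LS}}\,\mathcal{E}^{\text{W}}_{\mu_N}(f)$ for all real $f$, i.e.\ in the normalization of Definition~\ref{def:logsobolev} with $f$ replaced by $f^2$. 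We must therefore show $\mathrm{Var}_{\mu_N}(f)\leq \frac{C_{\text{LS}}}{2}\mathcal{E}^{\text{W}}_{\mu_N}(f)$.

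Fix a real $f$ on $\Omega$. Both sides of the target inequality are invariant under $f\mapsto f+c$: the variance trivially, and the Dirichlet form because it only involves differences $f(\sigma)-f(\eta)$. So we may assume $E_{\mu_N}[f]=0$. Apply Theorem~\ref{thm:logsobolev} to $g_\varepsilon:=1+\varepsilon f$ for small $\varepsilon>0$. Since $\Omega$ is finite, $g_\varepsilon>0$ once $\varepsilon$ is small enough. The Dirichlet form is quadratic and annihilates constants, so $\mathcal{E}^{\text{W}}_{\mu_N}(g_\varepsilon)=\varepsilon^2\mathcal{E}^{\text{W}}_{\mu_N}(f)$.

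For the left-hand side, Taylor expand pointwise: $g_\varepsilon^2=1+2\varepsilon f+\varepsilon^2f^2$ and $\phi(x)=x\log x$ satisfies $\phi(1+h)=h+\frac{h^2}{2}+O(h^3)$. With $h=2\varepsilon f+\varepsilon^2 f^2$ this gives $E[\phi(g_\varepsilon^2)]=\varepsilon^2E[f^2]+2\varepsilon^2E[f^2]+O(\varepsilon^3)$, using $E[f]=0$. Also $E[g_\varepsilon^2]=1+\varepsilon^2E[f^2]$, so $\phi(E[g_\varepsilon^2])=\varepsilon^2E[f^2]+O(\varepsilon^4)$. Hence $\mathrm{Ent}_{\mu_N}(g_\varepsilon^2)=2\varepsilon^2\mathrm{Var}_{\mu_N}(f)+O(\varepsilon^3)$, where the $O(\varepsilon^3)$ term is uniform because $f$ is bounded on the finite set $\Omega$.

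Dividing the inequality $2\varepsilon^2\mathrm{Var}_{\mu_N}(f)+O(\varepsilon^3)\leq C_{\text{LS}}\varepsilon^2\mathcal{E}^{\text{W}}_{\mu_N}(f)$ by $2\varepsilon^2$ and letting $\varepsilon\to0$ yields the claimed Poincaré inequality with constant $C_{\text{LS}}/2$. The only delicate point is matching the normalization of Theorem~\ref{thm:logsobolev} (entropy of $f^2$ against $\mathcal{E}(f)$) with Definition~\ref{def:logsobolev}, so that the factor $\frac12$ comes out correctly. Alternatively, one can simply cite Lemma 3.1 of \cite{Diaconis1996} via Remark~\ref{rem:lsirelatepoin}.
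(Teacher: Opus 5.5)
Your proof is correct and takes the same route as the paper. The paper gives no separate argument: it derives the corollary from Theorem~\ref{thm:logsobolev} through the standard implication $2C_{\text{PI}}\leq C_{\text{LS}}$ (Remark~\ref{rem:lsirelatepoin}, citing Lemma 3.1 of \cite{Diaconis1996}). Your linearization with $g_\varepsilon=1+\varepsilon f$, giving $\mathrm{Ent}_{\mu_N}(g_\varepsilon^2)=2\varepsilon^2\mathrm{Var}_{\mu_N}(f)+O(\varepsilon^3)$, is the argument behind that lemma written out, and it produces the correct factor $\tfrac12$.
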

Having the log-Sobolev inequality and the Poincar\'e inequality in hand, we are now in a position to give quantitative estimates for ergodic averages of the Wolff dynamics. The proof is based on a classical spectral decomposition argument (see e.g. \cite{Bassetti2006document,Rudolf2009document,Gong2006document}).
\begin{theorem}\label{thm: chernoffbound}
	Let $(Y_{k})_{k\in\mathbb{Z}^{+}}$ and $\mu_{N}$ be defined as in Theorem \ref{thm: transitionmatrix}. If the coupling constant $\hat{J}\in [0,+\infty)$, then for any real-valued function $f:\Omega\to\mathbb{R}$, we have
	\begin{align*}	
		\left\|\frac{1}{M}\sum_{k=1}^{M}P_{\text{W}}^{k}f-E_{\mu_{N}}[f]\right\|_{L^{2}(\mu_{N})}
		\leq\frac{\|f\|_{L^{2}(\mu_N)}}{M}\bigg(2+ \frac{e^{2\hat{J}}(e^{4\hat{J}}+1)}{2}\big(\frac{1}{2}+\hat{J}e^{\frac{e^{2\hat{J}}-1}{2}}\big)N\bigg),
	\end{align*}
	where $\|\cdot\|_{L^{2}(\mu_N)}$ denotes the norm associated with the inner product $\langle \cdot, \cdot \rangle_{\mu_N}$, and
	\begin{align*}
		P_{\text{W}}^{k}f(x)=\sum_{y\in\Omega}f(y) P^{k}_{\text{W}}(x,y),\qquad x\in\Omega, 1\leq k\leq M.
	\end{align*}
	Moreover,
	\begin{align*}
		E^{\mathbb{P}_{\mu_{N}}}\left[\left( \frac{1}{M}\sum_{k=1}^{M}f(Y_{k})-E_{\mu_{N}}[f]\right)^{2}\right]
		\leq\frac{\|f\|^{2}_{L^{2}(\mu_N)}}{M}e^{2\hat{J}}(e^{4\hat{J}}+1)\big(\frac{1}{2}+\hat{J}e^{\frac{e^{2\hat{J}}-1}{2}}\big)N,
	\end{align*}	
	where $\mathbb{P}_{\mu_{N}}$ denotes the law of the process started from $\mu_N$.
\end{theorem}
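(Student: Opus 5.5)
The plan is to use the spectral theorem for the self-adjoint operator $P_{\text{W}}$ on $L^{2}(\mu_N)$. By the detailed balance condition in Theorem~\ref{thm: transitionmatrix}, $P_{\text{W}}$ is self-adjoint there, with real spectrum contained in $[-1,1]$. Write $g:=f-E_{\mu_N}[f]$, so that $\|g\|_{L^{2}(\mu_N)}\le\|f\|_{L^{2}(\mu_N)}$ and $\frac1M\sum_{k=1}^{M}P_{\text{W}}^{k}f-E_{\mu_N}[f]=\frac1M\sum_{k=1}^{M}P_{\text{W}}^{k}g$. Set
\[
C_{\text{PI}}:=\frac{e^{2\hat{J}}(e^{4\hat{J}}+1)}{2}\big(\tfrac{1}{2}+\hat{J}e^{\frac{e^{2\hat{J}}-1}{2}}\big)N .
\]
Since $\mathcal{E}^{\text{W}}_{\mu_N}(g)=\langle g,(I-P_{\text{W}})g\rangle_{\mu_N}$, Corollary~\ref{eq:Poincare} says exactly that every eigenvalue $\lambda$ of $P_{\text{W}}$ restricted to mean-zero functions satisfies $\lambda\le 1-1/C_{\text{PI}}$. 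No lower spectral gap is available near $-1$, and indeed Theorem~\ref{thm:infiniteJ} suggests eigenvalues close to $-1$ for large $\hat J$. The argument must therefore tolerate $\lambda\in[-1,0)$ and use only the upper gap.

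\textbf{First bound.} Expand $g$ in an orthonormal eigenbasis. Then
\[
\Big\|\tfrac1M\sum_{k=1}^{M}P_{\text{W}}^{k}g\Big\|\le\frac{\|g\|}{M}\sup_{\lambda}\Big|\frac{\lambda(1-\lambda^{M})}{1-\lambda}\Big|,
\]
with the supremum over $\lambda\in[-1,1-1/C_{\text{PI}}]$. The two signs are handled separately.
\begin{itemize}
\item For $\lambda\ge0$, the quantity is at most $1/(1-\lambda)\le C_{\text{PI}}$.
\item For $\lambda<0$, the numerator is at most $2$ in absolute value and $1-\lambda\ge1$, so the quantity is at most $2$.
\end{itemize}
Hence the supremum is at most $2+C_{\text{PI}}$, which gives the first inequality.

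\textbf{Second bound.} Start the chain from $\mu_N$, so it is stationary. Expanding the square and using the Markov property,
\[
E^{\mathbb{P}_{\mu_N}}\Big[\Big(\tfrac1M\sum_{k=1}^{M} g(Y_k)\Big)^2\Big]=\frac1{M^2}\Big(M\|g\|^2+2\sum_{j=1}^{M-1}(M-j)\langle g,P_{\text{W}}^{j}g\rangle_{\mu_N}\Big).
\]
Spectrally, this equals $\frac{1}{M^{2}}\int S_M(\lambda)\,d\nu_g(\lambda)$, where $\nu_g$ is the spectral measure of $g$ with total mass $\|g\|^2$, and
\[
S_M(\lambda):=M+2\sum_{j=1}^{M-1}(M-j)\lambda^{j}=M\frac{1+\lambda}{1-\lambda}-\frac{2\lambda(1-\lambda^{M})}{(1-\lambda)^{2}}.
\]
Again the two signs are treated separately.
\begin{itemize}
\item For $0\le\lambda\le1-1/C_{\text{PI}}$, the second term is nonpositive, so $S_M\le 2M/(1-\lambda)\le 2C_{\text{PI}}M$.
\item For $\lambda\in[-1,0)$, we have $(1+\lambda)/(1-\lambda)\le1$, and the second term is at most $2|\lambda|(1+|\lambda|^M)/(1+|\lambda|)^2\le1$. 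Hence $S_M\le M+1\le 2M$, which is at most $2C_{\text{PI}}M$ as long as $C_{\text{PI}}\ge1$. This holds since the prefactor of $N$ in $C_{\text{PI}}$ is at least $1/2$ and $N\ge2$.
\end{itemize}
Therefore the variance is at most $2C_{\text{PI}}\|f\|^2/M$. Since $2C_{\text{PI}}$ is precisely the stated constant $e^{2\hat J}(e^{4\hat J}+1)(\tfrac12+\hat J e^{(e^{2\hat J}-1)/2})N$, this is the second claim.

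\textbf{Main obstacle.} The delicate point is the negative part of the spectrum. The Wolff chain is not lazy, so the Poincaré inequality gives no control near $\lambda=-1$. The proof rests on checking that both spectral functions above stay bounded there, by $2$ and by $M+1$ respectively, using only $|\lambda|\le1$. That check is the step to verify most carefully; everything else is routine.
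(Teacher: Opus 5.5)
Your proposal is correct and follows the same route as the paper. Both arguments use a spectral decomposition of the $\mu_N$-reversible kernel $P_{\text{W}}$ in $L^{2}(\mu_N)$, with Corollary~\ref{eq:Poincare} supplying the gap $1/(1-\lambda_2)\le C_{\text{PI}}$.

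The only difference is how the negative spectrum is handled. The paper splits into odd and even $M$, using that $\sum_{k=1}^{M}x^{k}$ is increasing on $[-1,1]$ for odd $M$, and invokes Lemma~3.2 of Rudolf for the monotonicity of $\hat{C}(M,\cdot)$. You instead bound both spectral multipliers directly on $[-1,0)$, which makes your argument self-contained. Your appeal to $C_{\text{PI}}\ge 1$ (via $N\ge 2$) is harmless and even avoidable: the alternating sum gives $S_M(\lambda)\le M$ for $\lambda\le 0$, so $C_{\text{PI}}\ge 1/2$ already suffices.
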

\begin{proof}
	Let $D_{\mu_N}$ denote the diagonal matrix indexed by $\Omega$, with entries
	\begin{align*}
		(D_{\mu_N})_{x,x} =\mu_N(x),\qquad x\in\Omega.
	\end{align*}
	Define
	\begin{align*}
		\tilde{P}_{\text{W}}(x,y):=\frac{\sqrt{\mu_{N}(x)}}{\sqrt{\mu_{N}(y)}}P_{\text{W}}(x,y),\qquad x,y\in\Omega.
	\end{align*}
	This corresponds to the similarity transform
	\begin{align*}
		\tilde{P}_{\text{W}} = D_{\mu_N}^{1/2}P_{\text{W}}D_{\mu_N}^{-1/2}.
	\end{align*}
	By the detailed balance condition,
	\begin{align*}
		\mu_{N}(x)P_{\text{W}}(x,y)=\mu_{N}(y)P_{\text{W}}(y,x),
	\end{align*}
	it follows that $\tilde{P}_{\text{W}}$ is symmetric, i.e.,
	\begin{align*}
		\tilde{P}_{\text{W}}(x,y)=\tilde{P}_{\text{W}}(y,x),\qquad x,y\in\Omega.
	\end{align*}
	Hence, by the spectral theorem, there exists an orthonormal basis $\{\psi_{j}\}_{j=1}^{\Lambda}$ and real eigenvalues $\{\lambda_{j}\}_{j=1}^{\Lambda}$ such that
	\begin{align*}
		\sum_{y\in\Omega}\tilde{P}_{\text{W}}(x,y)\psi_{j}(y)=\lambda_{j}\psi_{j}(x),
	\end{align*}
	and
	\begin{align*}
		\tilde{P}_{\text{W}}(x,y)=\sum_{j=1}^{\Lambda}\lambda_{j}\psi_{j}(x)\psi_{j}(y),
	\end{align*}
	where $\Lambda=\#\{\Omega\}=2^{N}$. Moreover,
	\begin{align*}
		\sum_{y\in\Omega}\tilde{P}_{\text{W}}(x,y)\sqrt{\mu_{N}(y)}=\sum_{y\in\Omega}\sqrt{\mu_{N}(x)}P_{\text{W}}(x,y)=\sqrt{\mu_{N}(x)},
	\end{align*}
	so that 1 is an eigenvalue of $\tilde{P}_{\text{W}}$. Define $\xi_j :=D_{\mu_N}^{-1/2}\psi_j$, then $\{\xi_j\}_{j=1}^{\Lambda}$ forms an orthonormal basis of $L^{2}(\mu_{N})$, and
	\begin{align*}
		\sum_{y\in\Omega}P_{\text{W}}(x,y)\xi_{j}(y)=&\sum_{y\in\Omega}P_{\text{W}}(x,y)\frac{\psi_{j}(y)}{\sqrt{\mu_{N}(y)}}\\
		=&\sum_{y\in\Omega}\frac{1}{\sqrt{\mu_{N}(x)}}\tilde{P}_{\text{W}}(x,y)\psi_{j}(y)=\frac{\lambda_{j}}{\sqrt{\mu_{N}(x)}}\psi_{j}(x)=\lambda_{j}\xi_{j}(x).
	\end{align*}
	Consequently, 
	\begin{align*}
		P_{\text{W}}(x,y)=\tilde{P}_{\text{W}}(x,y)\frac{\sqrt{\mu_{N}(y)}}{\sqrt{\mu_{N}(x)}}=\frac{\sqrt{\mu_{N}(y)}}{\sqrt{\mu_{N}(x)}}\sum_{j=1}^{\Lambda}\lambda_{j}\psi_{j}(x)\psi_{j}(y)=\sum_{j=1}^{\Lambda}\lambda_{j}\xi_{j}(x)\xi_{j}(y)\mu_{N}(y),
	\end{align*}
	and
	\begin{align*}
		P^{k}_{\text{W}}(x,y)=\sum_{j=1}^{\Lambda}\lambda^{k}_{j}\xi_{j}(x)\xi_{j}(y)\mu_{N}(y).
	\end{align*}
	Since $P_{\text{W}}$ is irreducible and reversible with respect to $\mu_N$, the eigenspace associated with eigenvalue $1$ is the 1D space spanned by $\xi_1 = \textbf{1}$, which is normalized in $L^2(\mu_N)$ since $\mu_{N}$ is a probability measure. The eigenvalues satisfy
	\begin{align*}
		1 = \lambda_1 > \lambda_2 \geq \cdots \geq \lambda_{\Lambda}\geq -1.
	\end{align*}
	Then for any real-valued function $f:\Omega\rightarrow\mathbb{R}$,
	\begin{equation}\label{eq:pfexpression}
		\begin{split}
			P_{\text{W}}^{k}f(x)=&\sum_{y\in\Omega}f(y) P^{k}_{\text{W}}(x,y)\\
			=&\xi_{1}(x)\sum_{y\in\Omega}f(y)\xi_{1}(y)\mu_{N}(y)+\sum_{j=2}^{\Lambda}\lambda^{k}_{j}\xi_{j}(x)\sum_{y\in\Omega}f(y)\xi_{j}(y)\mu_{N}(y)\\
			=&\langle\xi_{1}, f\rangle_{\mu_{N}}+\sum_{j=2}^{\Lambda}\lambda^{k}_{j}\langle f,\xi_{j}\rangle_{\mu_{N}}\xi_{j}(x),
		\end{split}
	\end{equation}
	where $\langle\cdot, \cdot\rangle_{\mu_{N}}$ is the inner product on $L^{2}(\mu_{N})$ and $\langle\xi_{1},f\rangle_{\mu_{N}}=E_{\mu_{N}}[f]$. Therefore,
	\begin{align*}
		\left\|\frac{1}{M}\sum_{k=1}^{M} P^{k}_{\text{W}}f-E_{\mu_{N}}[f]	\right\|^{2}_{L^{2}(\mu_{N})}=&\frac{1}{M^{2}}\sum_{i=2}^{\Lambda}\sum_{j=2}^{\Lambda}\big(\sum_{k=1}^{M}\lambda^{k}_{i}\big)\big(\sum_{k=1}^{M}\lambda^{k}_{j}\big)\langle f,\xi_{i}\rangle_{\mu_{N}}\langle f,\xi_{j}\rangle_{\mu_{N}}\langle\xi_{i},\xi_{j}\rangle_{\mu_{N}}\\
		=&\frac{1}{M^{2}}\sum_{i=2}^{\Lambda}\sum_{j=2}^{\Lambda}\big(\sum_{k=1}^{M}\lambda^{k}_{i}\big)\big(\sum_{k=1}^{M}\lambda^{k}_{j}\big)\langle f,\xi_{i}\rangle_{\mu_{N}}\langle f,\xi_{j}\rangle_{\mu_{N}}\delta_{ij}\\
		=&\frac{1}{M^{2}}\sum_{i=2}^{\Lambda}\big(\sum_{k=1}^{M}\lambda^{k}_{i}\big)^{2}\langle f,\xi_{i}\rangle_{\mu_{N}}^{2},
	\end{align*}
	where $\delta_{ij}$ denotes the Kronecker delta, i.e., $\delta_{ij}=1$ if $i=j$ and 0 otherwise. Define
	\begin{align*}
		\mathcal{C}(M,\lambda_{i}):=\sum_{k=1}^{M}\lambda^{k}_{i}.
	\end{align*}
	When $M$ is an odd integer, the function $\mathcal{C}(M,x)$ is an increasing function of $x$ on the interval $[-1,1]$. Consequently, for $2\leq i\leq\Lambda$ 
	\begin{align*}
		\mathcal{C}^{2}(M,\lambda_{i})\leq \max\{\mathcal{C}^{2}(M,\lambda_{2}),\mathcal{C}^{2}(M,\lambda_{\Lambda})\}.
	\end{align*}
	If $|\lambda_{\Lambda}|\leq\lambda_{2}$, then for $2\leq i\leq\Lambda$,
	\begin{align*}
		|\mathcal{C}(M,\lambda_{i})|\leq \sum_{k=1}^{M}|\lambda_{i}|^{k}\leq \sum_{k=1}^{M}\lambda_{2}^{k}=\mathcal{C}(M,\lambda_{2})=\frac{\lambda_{2}-\lambda_{2}^{M+1}}{1-\lambda_{2}}\leq \frac{1}{1-\lambda_{2}}.
	\end{align*}
	If $\lambda_{2}<|\lambda_{\Lambda}|\leq1$, then
	\begin{align*}
		|\mathcal{C}(M,\lambda_{2})|\leq&\max\{\frac{1}{1-\lambda_{2}},| \frac{\lambda_{2}-\lambda_{2}^{M+1}}{1-\lambda_{2}}|\}=\max\{\frac{1}{1-\lambda_{2}},\frac{|\lambda_{2}|(1-(-1)^{M}|\lambda_{2}|^{M})}{1+|\lambda_{2}|}\}\\
		=&\max\{\frac{1}{1-\lambda_{2}},\frac{|\lambda_{2}|(1+|\lambda_{2}|^{M})}{1+|\lambda_{2}|}\}\leq\max\{1,\frac{1}{1-\lambda_{2}}\},
	\end{align*}
	and
	\begin{align*}
		|\mathcal{C}(M,\lambda_{\Lambda})|\leq&\max\{1,| \frac{\lambda_{\Lambda}-\lambda_{\Lambda}^{M+1}}{1-\lambda_{\Lambda}}|\}=\max\{1,\frac{|\lambda_{\Lambda}||1-(-1)^{M}|\lambda_{\Lambda}|^{M}|}{1+|\lambda_{\Lambda}|}\}\\
		=&\max\{1,\frac{|\lambda_{\Lambda}|(1+|\lambda_{\Lambda}|^{M})}{1+|\lambda_{\Lambda}|}\}\leq1.
	\end{align*}
	Therefore, we know that for any odd integer $M$ and $2\leq i\leq \Lambda$,
	\begin{align*}
		\mathcal{C}^{2}(M,\lambda_{i})\leq\max\{1,\frac{1}{(1-\lambda_{2})^{2}}\},
	\end{align*}
	and by the Poincar\'e inequality in Corollary \ref{eq:Poincare}, we have
	\begin{align}\label{eq:oddineq}
		\left\|\frac{1}{M}\sum_{k=1}^{M} P^{k}_{\text{W}}f-E_{\mu_{N}}[f]	\right\|_{L^{2}(\mu_{N})}\leq&\frac{\|f\|_{L^{2}(\mu_N)}}{M}\max\{1,\frac{1}{1-\lambda_{2}}\}\nonumber\\
		\leq&\frac{\|f\|_{L^{2}(\mu_N)}}{M}\bigg(1+ \frac{e^{2\hat{J}}(e^{4\hat{J}}+1)}{2}\big(\frac{1}{2}+\hat{J}e^{\frac{e^{2\hat{J}}-1}{2}}\big)N\bigg).
	\end{align}
	When $M$ is an even integer, according to~\eqref{eq:pfexpression}~and~\eqref{eq:oddineq}, we obtain
	\begin{align*}
		\left\|\frac{1}{M}\sum_{k=1}^{M} P^{k}_{\text{W}}f-E_{\mu_{N}}[f]	\right\|_{L^{2}(\mu_{N})}\leq&\frac{1}{M}	\left\|\sum_{k=1}^{M-1} P^{k}_{\text{W}}f-E_{\mu_{N}}[f]\right\|_{L^{2}(\mu_{N})}+\frac{1}{M}\Vert P^{M}_{\text{W}}f-E_{\mu_{N}}[f]\Vert_{L^{2}(\mu_{N})}\\
		\leq&\frac{\|f\|_{L^{2}(\mu_N)}}{M}\max\{1,\frac{1}{1-\lambda_{2}}\}+\frac{1}{M}\sqrt{\sum_{j=2}^{\Lambda}\lambda_{j}^{2M}\langle f,\xi_{j}\rangle_{\mu_{N}}^{2}}\\
		\leq&\frac{\|f\|_{L^{2}(\mu_N)}}{M}\bigg(2+ \frac{e^{2\hat{J}}(e^{4\hat{J}}+1)}{2}\big(\frac{1}{2}+\hat{J}e^{\frac{e^{2\hat{J}}-1}{2}}\big)N\bigg).
	\end{align*}
	Moreover,
	\begin{align*}
		E^{\mathbb{P}_{\mu_{N}}}\left[\left( \frac{1}{M}\sum_{k=1}^{M}f(Y_{k})-E_{\mu_{N}}[f]\right)^{2}\right]
		=&\frac{1}{M^{2}}\sum_{k=1}^{M}E^{\mathbb{P}_{\mu_{N}}}\left[\left(f(Y_{k})-E_{\mu_{N}}[f]\right)^{2}\right]\\
		&+\frac{2}{M^{2}}\sum_{1\leq l<k\leq M }E^{\mathbb{P}_{\mu_{N}}}[(f(Y_{k})-E_{\mu_{N}}[f])(f(Y_{l})-E_{\mu_{N}}[f])].
	\end{align*}	
	Since
	\begin{align*}
		f(x)=&E_{\mu_{N}}[f]+\sum_{j=2}^{\Lambda}\langle f,\xi_{j}\rangle_{\mu_{N}}\xi_{j}(x),\qquad x\in\Omega,
	\end{align*}
	we have for $1\leq l\leq k\leq M$,	
	\begin{align*}
		E^{\mathbb{P}_{\mu_{N}}}[(f(Y_{k})-E_{\mu_{N}}[f])(f(Y_{l})-E_{\mu_{N}}[f])]=&E^{\mathbb{P}_{\mu_{N}}}\left[\left(\sum_{j=2}^{\Lambda}\langle f,\xi_{j}\rangle_{\mu_{N}}\xi_{j}(Y_{k})\right)\left(\sum_{s=2}^{\Lambda}\langle f,\xi_{s}\rangle_{\mu_{N}}\xi_{s}(Y_{l})\right)\right]\\
		=&\sum_{j=2}^{\Lambda}\langle f,\xi_{j}\rangle_{\mu_{N}}\sum_{s=2}^{\Lambda}\langle f,\xi_{s}\rangle_{\mu_{N}}E^{\mathbb{P}_{\mu_{N}}}[\xi_{j}(Y_{k})\xi_{s}(Y_{l})]\\
		=&\sum_{j=2}^{\Lambda}\langle f,\xi_{j}\rangle_{\mu_{N}}\sum_{s=2}^{\Lambda}\langle f,\xi_{s}\rangle_{\mu_{N}}\langle P_{\text{W}}^{k-l}\xi_{j},\xi_{s}\rangle_{\mu_{N}}.
	\end{align*}	
	Recall that $\{\xi_j\}_{j=1}^{\Lambda}$ is an orthonormal basis of $L^2(\mu_N)$ and that $P_{\text{W}}\xi_j=\lambda_j\xi_j$. Then
	\begin{align*}
		\langle P_{\text{W}}^{k-l}\xi_{j},\xi_{s}\rangle_{\mu_{N}}=&\lambda_j^{k-l}\langle \xi_j,\xi_s\rangle_{\mu_N}=\lambda^{k-l}_{j}\delta_{js}.
	\end{align*}	
	Hence,
	\begin{align*}
		E^{\mathbb{P}_{\mu_{N}}}[(f(Y_{k})-E_{\mu_{N}}[f])(f(Y_{l})-E_{\mu_{N}}[f])]
		=&\sum_{j=2}^{\Lambda}\lambda^{k-l}_{j}\langle f,\xi_{j}\rangle^{2}_{\mu_{N}},
	\end{align*}	
	and
	\begin{align*}
		E^{\mathbb{P}_{\mu_{N}}}\left[\left(\frac{1}{M}\sum_{k=1}^{M}f(Y_{k})-E_{\mu_{N}}[f]\right)^{2}\right]=&\frac{1}{M}\sum_{j=2}^{\Lambda}\langle f,\xi_{j}\rangle_{\mu_{N}}^{2}+\frac{2}{M^{2}}\sum_{1\leq l<k\leq M }\sum_{j=2}^{\Lambda}\lambda^{k-l}_{j}\langle f,\xi_{j}\rangle^{2}_{\mu_{N}}\\
		=&\frac{1}{M}\sum_{j=2}^{\Lambda}\langle f,\xi_{j}\rangle_{\mu_{N}}^{2}+\frac{2}{M^{2}}\sum_{j=2}^{\Lambda}\langle f,\xi_{j}\rangle^{2}_{\mu_{N}}\sum_{l=1}^{M}\sum_{k=l+1 }^{M}\lambda^{k-l}_{j}\\
		=&\frac{1}{M}\sum_{j=2}^{\Lambda}\langle f,\xi_{j}\rangle_{\mu_{N}}^{2}\left(1+\frac{2}{M}\frac{\lambda_{j}(M-1)-\lambda_{j}^{2}M+\lambda_{j}^{M+1}}{(1-\lambda_{j})^{2}}\right)\\
		=&\frac{1}{M}\sum_{j=2}^{\Lambda}\hat{C}(M,\lambda_{j})\langle f,\xi_{j}\rangle_{\mu_{N}}^{2},
	\end{align*}	
	where
	\begin{align*}
		\hat{C}(M,x):=&1+\frac{2}{M}\frac{x(M-1)-x^{2}M+x^{M+1}}{(1-x)^{2}},\qquad x\in [-1,1).
	\end{align*}	
	According to Lemma 3.2 in \cite{Rudolf2009document}, we know that the function $\hat{C}(M,\cdot)$ is  monotone increasing on $[-1,1)$. Hence, for $2\leq j\leq\Lambda$,  
	\begin{align*}
		\hat{C}(M,\lambda_{j})
		\leq&	\hat{C}(M,\lambda_{2})\leq\frac{2}{1-\lambda_{2}},
	\end{align*}	
	which implies
	\begin{align*}
		E^{\mathbb{P}_{\mu_{N}}}\left[\left(\frac{1}{M}\sum_{k=1}^{M}f(Y_{k})-E_{\mu_{N}}[f]\right)^{2}\right]\leq&\frac{1}{M}\hat{C}(M,\lambda_{2})\|f\|^{2}_{L^{2}(\mu_N)}\\
		\leq&\frac{\|f\|^{2}_{L^{2}(\mu_N)}}{M}e^{2\hat{J}}(e^{4\hat{J}}+1)\big(\frac{1}{2}+\hat{J}e^{\frac{e^{2\hat{J}}-1}{2}}\big)N.
	\end{align*}	
\end{proof}
\section{Application to the condensation of eigen microstate in the 1D Ising model}\label{sec:converspectralradius}
We apply the results obtained in Section~\ref{sec:3} to analyze the spectral behavior of the sample covariance matrix $\mathbb{K}$ constructed from microstates generated by the Wolff dynamics \cite{Wolff1989}. In particular, the functional inequalities established therein, including the log-Sobolev and Poincaré inequalities, provide the key tools for deriving the asymptotic spectral behavior of $\mathbb{K}$.
\subsection{Background and motivation}\label{sec:background}
We briefly recall the framework and simulation results of Chen Xiaosong and collaborators \cite{hu2019ducument,sun2021ducument}. In \cite{sun2021ducument}, Chen et al. developed an approach based on the sample covariance matrix and its associated eigen microstates to analyze the emerging phenomena and dynamic evolution of the complex systems without considering the Hamiltonian. To demonstrate the effectiveness of this approach, they performed Monte Carlo simulations of the 1D, 2D, and 3D Ising models in \cite{hu2019ducument,sun2021ducument}, where the microstates are generated using the Wolff dynamics \cite{Wolff1989}.

Their simulations indicate that in an ensemble without localization of microstate, all eigenvalues of the sample covariance matrix vanish as $M\rightarrow+\infty$ and $N\rightarrow+\infty$. In contrast, if the largest eigenvalue of the sample covariance matrix becomes dominant and converges to a finite nonzero limit as $M\rightarrow+\infty$ and $N\rightarrow+\infty$, there is a condensation of eigen microstate which is similar to the Bose-Einstein condensation, this condensation seems to show the appearance of a new phase and is considered as a phase transition, see e.g. \cite{hu2019ducument,sun2021ducument}. Furthermore, this framework has been applied to a variety of complex systems, including the price fluctuation in stock markets, the variation of ozone, the quantum Rabi model, and self-organized criticality; see e.g. \cite{Chen2021document,hu2019ducument,hu2023ducument,sun2021ducument,zhang2024document,Liu2022document} for related discussions.

These observations raise a natural theoretical question: can the emergence of a dominant eigenvalue be derived rigorously from quantitative properties of the Wolff dynamics? 
Motivated by this question, we provide a rigorous analysis of the spectral properties of the sample covariance matrix generated by the Wolff dynamics for the 1D Ising model.
\subsection{The spectrum of the sample covariance matrix at the fixed points of the RG transformation}\label{sec:renor}
In this section, we analyze the sample covariance matrix $\mathbb{K}$ at the fixed points $\hat{J}=0$ and $\hat{J}=+\infty$ of the RG transformation in the 1D Ising model. In particular, at the critical point $\hat{J}=+\infty$, we show that the largest eigenvalue $\lambda_1(\mathbb{K})$ of the sample covariance matrix $\mathbb{K}$  satisfies
\[
\lim_{M \to \infty} \lambda_1(\mathbb{K}) = 1,
\]
almost surely for any fixed positive integer $N$. This establishes the existence of a finite nonzero limit and is consistent with the simulation results.
\begin{theorem}\label{thm:fixedpoint}
	Let $(\{Y_{k}\}_{k\in\mathbb{Z}^{+}},\mathbb{P})$ denote the Markov chain considered in Theorem \ref{thm: transitionmatrix}, and let $\mu_{N}$ be the Gibbs measure of the 1D Ising model defined in Equation~\eqref{eq:isinggibbs}. For any $x\in\mathbb{R}^{N}$, define the function $g_{x}$ on $\Omega$ by
	\begin{align*}
		g_{x}(\sigma)=\langle \sigma	,x\rangle^{2}_{\mathbb{R}^{N}}, \qquad\sigma\in\Omega,
	\end{align*}
	where $\langle \cdot,\cdot\rangle_{\mathbb{R}^{N}}$ denotes the standard inner product on  $\mathbb{R}^{N}$. Then for the coupling constant $\hat{J}=0$ and any $x\in\mathbb{R}^{N}$ with $\Vert x\Vert_{2}^{2}=\langle x,x\rangle_{\mathbb{R}^{N}}\neq 0$, we have
	\begin{align}\label{eq:3.4}
		E_{\mu_{N}}[g_{x}]&=\Vert x\Vert_{2}^{2},
	\end{align}
	and the largest eigenvalue $\lambda_{1}(\mathbb{K})$ of the sample covariance matrix $\mathbb{K}$ satisfies
	\begin{align*}	
		\lim_{M\rightarrow+\infty}\lambda_{1}(\mathbb{K})=\frac{1}{N},\qquad \mathbb{P}\text{-a.s.}.
	\end{align*}
	Furthermore, for the coupling constant $\hat{J}=+\infty$ (critical point), we obtain that for any positive integer $N$, 
	\begin{align*}
		\lim\limits_{M\rightarrow+\infty}\lambda_{1}(\mathbb{K})=1,\qquad \mathbb{P}\text{-a.s.},
	\end{align*}
	and the second eigenvalue $\lambda_{2}(\mathbb{K})$ of the sample covariance matrix $\mathbb{K}$ satisfies
	\begin{align*}
		\lim\limits_{M\rightarrow+\infty}\lambda_{2}(\mathbb{K})=0, \qquad\mathbb{P}\text{-a.s.}.
	\end{align*}
\end{theorem}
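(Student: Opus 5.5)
The key observation is that $\mathbb{K}$ is a finite $N\times N$ matrix whose entries are ergodic averages. Indeed,
\[
\mathbb{K}_{ij}=\frac{1}{MN}\sum_{r=1}^{M}Y_{ir}Y_{jr},
\]
and
\[
\langle x,\mathbb{K}x\rangle_{\mathbb{R}^N}=\frac{1}{NM}\sum_{r=1}^{M}g_x(Y_r).
\]
The plan is therefore to identify the almost sure limit $\mathbb{K}_\infty$ of $\mathbb{K}$ entrywise, and then pass to the eigenvalues. Since $N$ is fixed, entrywise convergence gives convergence in operator norm. Eigenvalues depend continuously on the matrix (Weyl's inequality, $|\lambda_k(\mathbb{K})-\lambda_k(\mathbb{K}_\infty)|\le\|\mathbb{K}-\mathbb{K}_\infty\|$). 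So it suffices to compute $\mathbb{K}_\infty$ and its spectrum.

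For $\hat J=0$, the measure $\mu_N$ is uniform on $\Omega$. The spins are then i.i.d.\ symmetric, so $E_{\mu_N}[\sigma_i\sigma_j]=\delta_{ij}$. Expanding the square gives
\[
E_{\mu_N}[g_x]=\sum_{i,j}x_ix_jE_{\mu_N}[\sigma_i\sigma_j]=\|x\|_2^2,
\]
which is \eqref{eq:3.4}. Next apply Theorem~\ref{thm:birkhoff} with $\hat J=0$ to each of the finitely many functions $\sigma\mapsto\sigma_i\sigma_j$. Intersecting the $N^2$ full-measure events, we get $\mathbb{K}\to\frac1N I$ almost surely. Hence every eigenvalue, and in particular $\lambda_1(\mathbb{K})$, tends to $1/N$. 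Equivalently, in the language of $g_x$, we have $\langle x,\mathbb{K}x\rangle\to\|x\|_2^2/N$ for every $x$, and taking the maximum over the unit sphere gives the same conclusion.

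For $\hat J=+\infty$, Theorem~\ref{thm:birkhoff} is not available because the chain is not ergodic; we use Theorem~\ref{thm:infiniteJ} instead. Almost surely there is a finite (indeed bounded) time $\tau\le\tilde{\mathcal C}^{Y_1}$ after which $Y_k\in\{-\mathbf 1,+\mathbf 1\}$ for all $k\ge\tau$. For such $k$ we have $Y_kY_k^{T}=\mathbf 1\mathbf 1^{T}$, whatever the sign. Hence
\[
\mathbb{K}=\frac{1}{NM}\sum_{r<\tau}Y_rY_r^T+\frac{M-\tau+1}{NM}\mathbf 1\mathbf 1^T .
\]
The first term has entries bounded by $\tau/(NM)\to0$, so $\mathbb{K}\to\frac1N\mathbf 1\mathbf 1^T$ almost surely. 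This limit is the rank-one projection onto $\mathbf 1/\sqrt N$, with eigenvalues $1,0,\dots,0$. By Weyl's inequality, $\lambda_1(\mathbb{K})\to1$ and $\lambda_2(\mathbb{K})\to0$ almost surely.

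The only delicate points are bookkeeping. First, at $\hat J=+\infty$ one must use the sign-invariance of $Y_kY_k^T$: the chain oscillates, but the outer products do not, so the period-2 behavior is harmless. Second, one must justify passing from entrywise almost sure convergence to eigenvalue convergence. This follows because the matrices are symmetric and of fixed dimension $N$, together with the trace identity $\mathrm{Tr}(\mathbb{K})=1$, which is consistent with both limits. I expect no real obstacle beyond these points.
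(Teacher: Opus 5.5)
Your proof is correct. At $\hat J=+\infty$ it is essentially the paper's argument. You split off the pre-absorption terms, use $Y_kY_k^{T}=\mathbf 1\mathbf 1^{T}$ after absorption regardless of sign, and apply Weyl. The paper reads off the explicit bounds $\lambda_1(\mathbb K)\ge 1-N/M$ and $\lambda_2(\mathbb K)\le N/M$ directly, leaving $\lambda_1(\mathbb K)\le\mathrm{Tr}(\mathbb K)=1$ implicit. Your matrix limit $\frac1N\mathbf 1\mathbf 1^{T}$ gives both sides at once.

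The genuine difference is at $\hat J=0$. The paper writes $\lambda_1(\mathbb K)=\frac{1}{MN}\sum_k g_{\eta_1}(Y_k)$, with $\eta_1$ the top unit eigenvector. It then applies Theorem~\ref{thm:birkhoff} to $g_{\eta_1}$ and uses \eqref{eq:3.4}. Because $\eta_1$ depends on $M$ and on the trajectory, this step tacitly needs the ergodic averages of $g_x$ to converge uniformly over the unit sphere. That uniformity holds precisely because $\frac1M\sum_k g_x(Y_k)=x^{T}(\frac1M\sum_k Y_kY_k^{T})x$ is a quadratic form whose coefficients are the $N^2$ ergodic averages of $\sigma_i\sigma_j$.

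Your reduction to these finitely many functions makes that uniformity explicit: one null set, $\mathbb K\to\frac1N I$, then Weyl. It also gives the stronger conclusion that every eigenvalue tends to $1/N$. Your closing ``equivalently'' sentence (convergence pointwise in $x$, then maximize over the sphere) is valid only on the strength of this entrywise convergence. On its own, with uncountably many $x$ and an $M$-dependent maximizer, it would reintroduce the same gap. Your computation $E_{\mu_N}[\sigma_i\sigma_j]=\delta_{ij}$ is the same as the paper's enumeration over $\Omega$.

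There is one harmless slip. The bound on the absorption time should read $\tau\le\tilde{\mathcal C}^{Y_1}+1$, not $\tau\le\tilde{\mathcal C}^{Y_1}$. At $\hat J=+\infty$ each flip reduces the number of blocks of each sign by exactly one, so a configuration with one $+$ block and one $-$ block has $\tilde{\mathcal C}^{Y_1}=1$ but $\tau=2$. The slip comes from the index in the statement of Theorem~\ref{thm:infiniteJ}. The paper's proof of the present theorem uses the correct indices, $Y_{\tilde{\mathcal C}^{\tilde\sigma}+k}$ with $k\ge1$. Since your argument uses only that $\tau$ is bounded, nothing changes.
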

\begin{proof}
	We first consider the case $x=\frac{1}{\sqrt{N}}(+\textbf{1})$. Then
	\begin{align*}
		E_{\mu_{N}}[g_{x}]=\frac{1}{N}\sum_{i=0}^{N}(N-2i)^{2}\mu_{N}(\#\{\mathcal{C}^{\sigma}\}=i),
	\end{align*}
	where $\mathcal{C}^{\sigma}$ is the set defined in~\eqref{eq:decomposet}~and $\#\{\mathcal{C}^{\sigma}\}$ is the number of $+1$ spins in $\sigma$. For $\hat{J}=0$,
	\begin{align*}
		\mu_{N}(\#\{\mathcal{C}^{\sigma}\}=i)=\frac{1}{2^{N}}C_{N}^{i}=\frac{1}{\Lambda}C_{N}^{i},
	\end{align*}
	and thus
	\begin{align*}
		E_{\mu_{N}}[g_{x}]=&\frac{1}{N}\sum_{i=0}^{N}(N-2i)^{2}\mu_{N}(\#\{\mathcal{C}^{\sigma}\}=i)\\
		=&\frac{1}{N}\sum_{i=0}^{N}\frac{C_{N}^{i}}{\Lambda}(N-2i)^{2}
		=N+\frac{4}{N}\sum_{i=0}^{N}\frac{C_{N}^{i}}{\Lambda}i^{2}-4\sum_{i=0}^{N}\frac{C_{N}^{i}}{\Lambda}i=1. 	
	\end{align*}
Moreover, for convenience, we list the configurations in $\Omega$ in binary order
\begin{align}\label{eq:2.7}
	&S^{T}=\left(                 
	\begin{array}{ccccccc}
		\\
		(S^{1})^{T}\\
		(S^{2})^{T}\\	
		(S^{3})^{T}\\
		(S^{4})^{T}\\
		\vdots \\
		(S^{2^{N-1}})^{T}\\
		\vdots\\
		(S^{2^{N}})^{T}
	\end{array}
	\right)=            
	\left(                 
	\begin{array}{rrrrrrrrrrr}
		\sigma_{1}&\sigma_{2}&\cdots&\cdots&\sigma_{N-1}&\sigma_{N}\\
		1 &1 &\cdots&\cdots&1 &1\\
		-1&1 &\cdots&\cdots&1 &1\\	
		1 &-1 &\cdots&\cdots&1 &1\\
		-1&-1 &\cdots&\cdots&1 &1\\
		\qquad &\vdots &\qquad &\vdots\\
		1 &1 &\cdots&\cdots&1 &-1\\
		\qquad &\vdots &\qquad &\vdots\\
		-1&-1 &\cdots&\cdots&-1 &-1
	\end{array}
	\right).            
\end{align}	
Then, for the case $\hat{J}=0$ and any $x\in \mathbb{R^{N}}$ with $\Vert x\Vert_{2}\neq 0$,
\begin{align*}
	\frac{1}{\vert|x\vert|_{2}^{2}}E_{\mu_{N}}[g_{x}]=&\frac{1}{\vert|x\vert|_{2}^{2}}\sum_{j=1}^{\Lambda}\left(\sum_{i=1}^{N}x_{i}S^{j}_{i}\right)^{2}\mu_{N}(S^{j})=\frac{1}{\vert|x\vert|_{2}^{2}}\sum_{j=1}^{\Lambda}\left(\sum_{i=1}^{N}x_{i}S^{j}_{i}\right)^{2}\frac{1}{\Lambda}\\
	=&\frac{1}{\Lambda\vert|x\vert|_{2}^{2}}\sum_{j=1}^{\Lambda}\left(\sum_{k=1}^{N}x_{k}S^{j}_{k}\right)\left(\sum_{i=1}^{N}x_{i}S^{j}_{i}\right)=\frac{1}{\Lambda\vert|x\vert|_{2}^{2}}\sum_{k=1}^{N}\sum_{i=1}^{N}x_{i}x_{k}\left(\sum_{j=1}^{\Lambda}S^{j}_{k}S^{j}_{i}\right),
\end{align*}
where $S_{i}^{j}$ is the $i$-th component of $S^{j}$. If $i\neq k$,
\begin{align*}	
	\sum_{j=1}^{\Lambda}S^{j}_{k}S^{j}_{i}=\sum_{\sigma_{1}=\pm1}\cdots\sum_{\sigma_{N}=\pm1}\sigma_{k}\sigma_{i}=\sum_{\sigma_{j}=\pm1,j\neq i,k}\left(\sum_{\sigma_{i}=\pm1}\sum_{\sigma_{k}=\pm1}\sigma_{k}\sigma_{i}\right)=0.
\end{align*}
If $i=k$, 
\begin{align*}	
	\sum_{j=1}^{\Lambda}S^{j}_{i}S^{j}_{i}=\sum_{\sigma_{1}=\pm1}\cdots\sum_{\sigma_{N}=\pm1}\sigma^{2}_{i}=2^{N}=\Lambda.
\end{align*}
Hence, we obtain Equation~\eqref{eq:3.4}
\begin{align*}
	\frac{1}{\vert|x\vert|_{2}^{2}}E_{\mu_{N}}[g_{x}]=&\frac{1}{\Lambda\vert|x\vert|_{2}^{2}}\sum_{k=1}^{N}\sum_{i=1}^{N}x_{i}x_{k}\left(\sum_{j=1}^{\Lambda}S^{j}_{k}S^{j}_{i}\right)=\frac{1}{\vert|x\vert|_{2}^{2}}\sum_{i=1}^{N}x_{i}^{2}=1.
\end{align*}
Denote by $\eta_{1}$ the unit eigenvector corresponding to the largest eigenvalue $\lambda_{1}(\mathbb{K})$ of the sample covariance matrix $\mathbb{K}$. Therefore,
\begin{align*}
	\lambda_{1}(\mathbb{K})=\eta_{1}^{T}\mathbb{K}\eta_{1}	=\frac{1}{M}\sum_{k=1}^{M}\eta_{1}^{T}X_{k} X^{T}_{k}\eta_{1}=\frac{1}{M}\sum_{k=1}^{M}\langle X_{k},\eta_{1}\rangle^{2}_{\mathbb{R}^{N}}.
\end{align*}
By Theorem \ref{thm:birkhoff} and Equation~\eqref{eq:3.4}, we have
\begin{align*}
	\lambda_{1}(\mathbb{K})=&\eta_{1}^{T}\mathbb{K}\eta_{1}=\frac{1}{M}\sum_{k=1}^{M}\langle X_{k},\eta_{1}\rangle^{2}_{\mathbb{R}^{N}}\\
	=&\frac{1}{MN}\sum_{k=1}^{M}g_{\eta_{1}}(Y_{k})\rightarrow \frac{1}{N} E_{\mu_{N}}[g_{\eta_{1}}]=\frac{1}{N},\qquad\mathbb{P}\text{-a.s.},
\end{align*}
as $M\rightarrow +\infty$.

If the coupling constant $\hat{J}=+\infty$ (critical point), by Theorem \ref{thm:infiniteJ}, for any initial configuration $Y_{1}=\tilde{\sigma}\in\Omega$ and $k\geq 1$,
\begin{align*}
	\mathbb{P}(Y_{\tilde{\mathcal{C}}^{\tilde{\sigma}}+k}=+\textbf{1}\mid Y_{1}=\tilde{\sigma})+\mathbb{P}(Y_{\tilde{\mathcal{C}}^{\tilde{\sigma}}+k}=-\textbf{1}\mid Y_{1}=\tilde{\sigma})=1,
\end{align*}
which implies
\begin{align*}
	\mathbb{P}(Y_{\tilde{\mathcal{C}}^{\tilde{\sigma}}+k}Y_{\tilde{\mathcal{C}}^{\tilde{\sigma}}+k}^{T}=\textbf{1}\textbf{1}^{T}\mid Y_{1}=\tilde{\sigma})=1,\qquad k\geq 1.
\end{align*}
Hence, for any given initial configuration $Y_{1}=\tilde{\sigma}\in\Omega$ and $M\geq N$,
\begin{align*}
	\mathbb{K}=&\frac{1}{M}\sum_{k=1}^{M}X_{k} X^{T}_{k}=\frac{1}{MN}\sum_{k=1}^{M}Y_{k} Y^{T}_{k}=\frac{1}{MN}\sum_{k=1}^{\tilde{\mathcal{C}}^{\tilde{\sigma}}}Y_{k} Y^{T}_{k}+\frac{1}{MN}\sum_{k=\tilde{\mathcal{C}}^{\tilde{\sigma}}+1}^{M}Y_{k} Y^{T}_{k}\\
	=&\frac{1}{MN}\sum_{k=1}^{\tilde{\mathcal{C}}^{\tilde{\sigma}}}Y_{k} Y^{T}_{k}+\frac{M-\tilde{\mathcal{C}}^{\tilde{\sigma}}}{M}\frac{1}{N}\textbf{1}\textbf{1}^{T},\qquad \mathbb{P}\text{-a.s.}.
\end{align*}
Let $\lambda_{2}(A)$ denote the second largest eigenvalue of a matrix $A$. By Weyl’s inequality \cite{Weyl1912document}, we get for $M\geq N$
\begin{align*}
	\lambda_{1}(\mathbb{K})\geq&\frac{M-\tilde{\mathcal{C}}^{\tilde{\sigma}}}{M}\lambda_{1}(\frac{1}{N}\textbf{1}\textbf{1}^{T})=\frac{M-\tilde{\mathcal{C}}^{\tilde{\sigma}}}{M}\geq1-\frac{N}{M},\qquad \mathbb{P}\text{-a.s.},
\end{align*}
and 
\begin{align*}
	0\leq\lambda_{2}(\mathbb{K})\leq&\lambda_{2}(\mathbb{K}-\frac{1}{MN}\sum_{k=1}^{\tilde{\mathcal{C}}^{\tilde{\sigma}}}Y_{k} Y^{T}_{k})+\frac{1}{MN}\lambda_{1}(\sum_{k=1}^{\tilde{\mathcal{C}}^{\tilde{\sigma}}}Y_{k} Y^{T}_{k})\\
	=&\frac{M-\tilde{\mathcal{C}}^{\tilde{\sigma}}}{M}\lambda_{2}(\frac{1}{N}\textbf{1}\textbf{1}^{T})+\frac{1}{MN}\lambda_{1}(\sum_{k=1}^{\tilde{\mathcal{C}}^{\tilde{\sigma}}}Y_{k} Y^{T}_{k})\\
	\leq&\frac{M-\tilde{\mathcal{C}}^{\tilde{\sigma}}}{M}\lambda_{2}(\frac{1}{N}\textbf{1}\textbf{1}^{T})+\frac{1}{MN}\text{Tr}(\sum_{k=1}^{\tilde{\mathcal{C}}^{\tilde{\sigma}}}Y_{k} Y^{T}_{k})
	=\frac{\tilde{\mathcal{C}}^{\tilde{\sigma}}}{M}\leq\frac{N}{M},\qquad \mathbb{P}\text{-a.s.}.
\end{align*}
Hence, 
\begin{align*}
	\lim\limits_{M\rightarrow+\infty}\lambda_{1}(\mathbb{K})=1, \qquad	\lim\limits_{M\rightarrow+\infty}\lambda_{2}(\mathbb{K})=0, \qquad\mathbb{P}\text{-a.s.}.
\end{align*}
\end{proof}
\begin{remark}\label{rem:critical}
The above result shows that, at the critical point $\hat{J}=+\infty$, the largest eigenvalue of the sample covariance matrix $\mathbb{K}$ converges to a finite nonzero limit as $M\rightarrow +\infty$ for any fixed $N$, which implies that $\lambda_1(\mathbb{K})$ admits a finite nonzero limit 
under the iterated limit $M\rightarrow +\infty$ and $N\rightarrow +\infty$. This behavior is in sharp contrast to that in the subcritical regime, where the spectral radius of $\mathbb{K}$ will be shown to vanish.
\end{remark}
\subsection{Spectral estimation of the sample covariance matrix in the entire subcritical regime under the iterated limit}\label{sec:sec:almost}
We now analyze the spectral behavior of the sample covariance matrix $\mathbb{K}$ in the entire subcritical regime $\hat{J}\in[0,+\infty)$, which corresponds to the temperature range  $T> T_{c}=0$ for the 1D Ising model. As suggested by the simulation results discussed in Section~\ref{sec:background}, all eigenvalues of the sample covariance matrix $\mathbb{K}$ are expected to vanish as $M \to +\infty$ and $N \to +\infty$ in this regime.  In this section, we confirm this behavior rigorously by proving that the spectral radius $\rho(\mathbb{K})$ of the sample covariance matrix $\mathbb{K}$ converges to $0$ almost surely under the iterated limit $M \to +\infty$, $N \to +\infty$. This provides theoretical support for Chen's simulation results.

Recall from \eqref{eq:defcov} that, for $1 \leq i,j \leq N$,
\begin{align*}	\mathbb{K}_{ij}=\frac{1}{M}\sum_{k=1}^{M}X_{ik}	X_{jk}=\frac{1}{MN}\sum_{k=1}^{M}Y_{ik}	Y_{jk}.
\end{align*}
To control the spectral radius of $\mathbb{K}$, we introduce the matrix norm
\[
\|\mathbb{K}\|_{1} := \max_{1 \leq i \leq N} \sum_{j=1}^{N} |\mathbb{K}_{ij}|.
\]
By the Gershgorin circle theorem, the spectral radius $\rho(\mathbb{K})$ satisfies
\[
0 \leq \rho(\mathbb{K}) \leq \|\mathbb{K}\|_{1}.
\]
\begin{theorem}\label{thm:almost}
Consider the 1D Ising model with coupling constant $\hat{J}\in[0,+\infty)$. Let $(Y_{k})_{k\in\mathbb{Z}^{+}}$ and $\mu_{N}$ be defined as in Theorem \ref{thm: transitionmatrix}, the sample covariance matrix $\mathbb{K}$ satisfies
\begin{align*}	\mathbb{P}(\lim\limits_{N\rightarrow +\infty}\lim\limits_{M\rightarrow +\infty}	\rho(\mathbb{K})=0)=\mathbb{P}(\lim\limits_{N\rightarrow +\infty}\lim\limits_{M\rightarrow +\infty}\Vert\mathbb{K}\Vert_{1}=0)=1. 	
\end{align*}
Furthermore, we know that for $\hat{J}\in[0,+\infty)$
\begin{align*}
	\lim\limits_{N\rightarrow +\infty}\lim\limits_{M\rightarrow +\infty}	\lambda_{1}(\mathbb{K})=0,\qquad  \mathbb{P}\text{-a.s.}.
\end{align*} 
\end{theorem}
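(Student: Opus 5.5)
The plan is to take the limit $M\to+\infty$ entrywise with the ergodic theorem, then bound the limiting row sums explicitly with the correlation formula, and finally pass to $N\to+\infty$.

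\textbf{Step 1 (limit in $M$).} Fix $N$ and $\hat{J}\in[0,+\infty)$. For each pair $(i,j)$ apply Theorem~\ref{thm:birkhoff} to $g_{ij}(\sigma)=\sigma_i\sigma_j$. This gives
\[
\mathbb{K}_{ij}=\frac{1}{N}\cdot\frac{1}{M}\sum_{k=1}^M Y_{ik}Y_{jk}\longrightarrow \frac{1}{N}E_{\mu_N}[\sigma_i\sigma_j]\quad \mathbb{P}\text{-a.s.}
\]
There are only $N^2$ entries, so intersecting the corresponding full-measure events gives a single full-measure event $\Omega_N$. On $\Omega_N$ the whole matrix $\mathbb{K}$ converges to $\mathbb{K}^\infty:=\frac{1}{N}(E_{\mu_N}[\sigma_i\sigma_j])_{i,j}$. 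Since $\|\cdot\|_1$ is continuous, on $\Omega_N$ we get $\lim_{M}\|\mathbb{K}\|_1=\|\mathbb{K}^\infty\|_1$. Then $\bigcap_N\Omega_N$ still has probability one. The construction couples all $N$ on one probability space, as the statement implicitly assumes.

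\textbf{Step 2 (bound on $\|\mathbb{K}^\infty\|_1$).} By \eqref{eq:1DIsingcorrela} every correlation $E_{\mu_N}[\sigma_i\sigma_j]$ is nonnegative, because $\boldsymbol{\theta}\in[0,1)$. The row sums are therefore exactly the susceptibility sums, and \eqref{eq:sumcorrelation}–\eqref{eq:upperspeci} give
\[
\|\mathbb{K}^\infty\|_1=\frac1N\max_i\sum_{j}E_{\mu_N}[\sigma_i\sigma_j]\le \frac{e^{2\hat{J}}}{N}.
\]
This bound is deterministic and goes to $0$ as $N\to+\infty$ for fixed finite $\hat{J}$. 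Hence, on $\bigcap_N\Omega_N$, $\lim_N\lim_M\|\mathbb{K}\|_1=0$.

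\textbf{Step 3 (spectral radius and $\lambda_1$).} By the Gershgorin bound $0\le\rho(\mathbb{K})\le\|\mathbb{K}\|_1$, which holds for every $M$. Taking $\limsup$ and $\liminf$ in $M$ and then in $N$ gives $\lim_N\lim_M\rho(\mathbb{K})=0$ on the same event. For this iterated limit to make sense, the inner $M$-limit must exist. I would check this by noting that eigenvalues depend continuously on matrix entries, so $\rho(\mathbb{K})\to\rho(\mathbb{K}^\infty)$ on $\Omega_N$. Finally, $\mathbb{K}$ is positive semidefinite, so $\lambda_1(\mathbb{K})=\rho(\mathbb{K})$, which gives the last claim.

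\textbf{Main obstacle.} The argument is short; the only delicate points are measure-theoretic. One has to take a countable intersection over $(N,i,j)$, and one has to make sure the inner limits actually exist rather than merely bounding a $\limsup$. The continuity of spectra handles the latter.
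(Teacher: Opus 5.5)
Your proposal is correct and follows the paper's proof essentially step for step. Both arguments take the entrywise almost-sure limit in $M$ via Theorem~\ref{thm:birkhoff}, use nonnegativity of the correlations together with the row-sum bound from \eqref{eq:1DIsingcorrela}--\eqref{eq:upperspeci} to get a limit of order $1/N$, and conclude with the Gershgorin bound. Your additional points (intersecting countably many full-measure events, coupling the chains for different $N$ on one probability space, existence of the inner $M$-limit by continuity of the spectrum, and $\lambda_1(\mathbb{K})=\rho(\mathbb{K})$ because $\mathbb{K}$ is positive semidefinite) fill in details that the paper leaves implicit.
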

\begin{proof}
For $1\leq i,j\leq N$,
\begin{align*}	\mathbb{K}_{ij}=\left(\frac{1}{M}\sum_{k=1}^{M}X_{k} X^{T}_{k}\right)_{ij}=\frac{1}{MN}\sum_{k=1}^{M}Y_{ik}Y_{jk}.
\end{align*}
Let $f_{ij}$ be the real-valued function on $\Omega$ defined by $f_{ij}(\sigma)=\sigma_{i}\sigma_{j}$. Define
\begin{align*}
	\hat{\mathbb{K}}_{ij}:=\frac{1}{N}E_{\mu_{N}}[f_{ij}].
\end{align*}
By Theorem \ref{thm:birkhoff}, we have
\begin{align}\label{eq:4.6}
	\lim_{M\rightarrow +\infty}\mathbb{K}_{ij}=\lim_{M\rightarrow +\infty}\frac{1}{MN}\sum_{k=1}^{M}f_{ij}(Y_{k})=\frac{1}{N}E_{\mu_{N}}[f_{ij}]=\hat{\mathbb{K}}_{ij},\qquad \mathbb{P}\text{-a.s.}.
\end{align}
According to Equation~\eqref{eq:1DIsingcorrela}~, we know that for any $\hat{J}\in[0,+\infty)$ and $1\leq i\leq j\leq N$,
\begin{align*}	\hat{\mathbb{K}}_{ij}=\frac{1}{N}E_{\mu_{N}}[f_{ij}]=\frac{1}{N}E_{\mu_{N}}[\sigma_{i}\sigma_{j}]=\frac{1}{N}\frac{\boldsymbol{\theta}^{j-i}+\boldsymbol{\theta}^{N-j+i}}{1+\boldsymbol{\theta}^{N}}\geq 0.
\end{align*}
Then by~\eqref{eq:sumcorrelation},
\begin{align}\label{eq:upperboundsamplecov}	\sum_{j=1}^{N}|\hat{\mathbb{K}}_{ij}|=\frac{1}{N}\sum_{j=1}^{N}E_{\mu_{N}}[\sigma_{i}\sigma_{j}]&\leq  \frac{1}{N}+\frac{1}{N}\frac{2\lambda_{+}}{\lambda_{+}-\lambda_{-}}\frac{\boldsymbol{\theta}-\boldsymbol{\theta}^{N}}{1+\boldsymbol{\theta}^{N}}.
\end{align}
Applying Gershgorin circle theorem again, the spectral radius of the matrix $\hat{\mathbb{K}}$ with elements $\hat{\mathbb{K}}_{ij}$ has the following upper bound
\begin{align*}
	\rho(\hat{\mathbb{K}})\leq \Vert\hat{\mathbb{K}}\Vert_{1}= \max_{1\leq i\leq N}\{\sum_{j=1}^{N}|\hat{\mathbb{K}}_{ij}|\}\leq \frac{1}{N}+\frac{1}{N}\frac{2\lambda_{+}}{\lambda_{+}-\lambda_{-}}\frac{\boldsymbol{\theta}-\boldsymbol{\theta}^{N}}{1+\boldsymbol{\theta}^{N}}.
\end{align*}	
Taking the limit as $N\rightarrow +\infty$, we obtain
\begin{align*}	\lim\limits_{N\rightarrow +\infty}\rho(\hat{\mathbb{K}})=\lim\limits_{N\rightarrow +\infty}	\Vert\hat{\mathbb{K}}\Vert_{1}= 0.
\end{align*}
According to Equation~\eqref{eq:4.6}, we get
\begin{align*}	\lim\limits_{N\rightarrow +\infty}\lim\limits_{M\rightarrow +\infty}\Vert\mathbb{K}\Vert_{1}&\leq \lim\limits_{N\rightarrow +\infty}\lim\limits_{M\rightarrow +\infty}\max_{1\leq i\leq N}\{\sum_{j=1}^{N}|\mathbb{K}_{ij}|\}=\lim\limits_{N\rightarrow +\infty}\max_{1\leq i\leq N}\{\sum_{j=1}^{N}|\hat{\mathbb{K}}_{ij}|\}\\
	&\leq \lim\limits_{N\rightarrow +\infty}\frac{1}{N}+\lim\limits_{N\rightarrow +\infty}\frac{1}{N}\frac{2\lambda_{+}}{\left(\lambda_{+}-\lambda_{-}\right)}\frac{\boldsymbol{\theta}-\boldsymbol{\theta}^{N}}{1+\boldsymbol{\theta}^{N}}=0,\qquad \mathbb{P}\text{-a.s.}.
\end{align*}
Finally, 
\begin{align*}	\lim\limits_{N\rightarrow +\infty}\lim\limits_{M\rightarrow +\infty} \lambda_{1}(\mathbb{K})= 0,\qquad \mathbb{P}\text{-a.s.}.
\end{align*}
\end{proof}
\begin{remark}
The above result shows that, in the entire subcritical regime $\hat{J}\in[0,+\infty)$, the spectral radius of the sample covariance matrix vanishes under the iterated limit. This is in contrast to the behavior at the critical point $\hat{J}=+\infty$, as noted in Remark~\ref{rem:critical}. These two distinct asymptotic behaviors are consistent with the phenomenon observed in Chen's simulations.
\end{remark}
\begin{remark}
The convergence of the spectral radius $\rho(\mathbb{K})$ is closely related to the decay behavior of correlation functions. Such estimates have been extensively studied for a wide range of models, including unbounded spin systems (see e.g. \cite{Bodineau1999document,Helffer1998ducument,Helffer1999ducument,Yoshida1999document,Menz2014document,Otto2007document}), the high dimensional Ising models (see e.g. \cite{Bauerschmidt2024document,Ding2023document,Duminil2020document,Ott2023document}) and the lattice Yang-Mills fields (see e.g. \cite{Adhikari2025document,Shen2021document,Shen2024document}). This suggests that the vanishing of the spectral radius $\rho(\mathbb{K})$ under the iterated limit may extend to more general interacting systems.
\end{remark}
\subsection{Spectral estimation of the sample covariance matrix in the entire subcritical regime under the double limit}\label{sec:4}
Having established the almost sure convergence of the spectral radius $\rho(\mathbb{K})$ under the iterated limit in the previous section, we now investigate whether the same behavior holds under the double limit. To this end, we derive an explicit upper bound for $\Vert\mathbb{K}\Vert_{1}$ in $L^{2}(\mathbb{P}_{\mu_{N}})$, which implies that the spectral radius $\rho(\mathbb{K})$ converges to 0 in $L^{2}(\mathbb{P}_{\mu_{N}})$ under the double limit $M,N\to+\infty$ with $N=o(\sqrt{M})$. This shows that, under both the iterated limit and the double limit, the spectral radius of the sample covariance matrix converges to $0$ in the entire subcritical regime.
\begin{theorem}\label{thm:doubleinproba}Consider the 1D Ising model with coupling constant $\hat{J}\in[0,+\infty)$. Let $(Y_{k})_{k\in\mathbb{Z}^{+}}$ and $\mu_{N}$ be defined as in Theorem \ref{thm: transitionmatrix}. Then for $N=o(\sqrt{M})$ (i.e., $N/\sqrt{M} \to 0$ as $M\to+\infty$),
\begin{align*}
	\lim_{N,M\rightarrow+\infty}E^{\mathbb{P}_{\mu_{N}}}[\Vert\mathbb{K}\Vert^{2}_{1}]=0,
\end{align*}
which implies
\begin{align*}
	\lim_{N,M\rightarrow+\infty}E^{\mathbb{P}_{\mu_{N}}}[\rho(\mathbb{K})]=0.
\end{align*}
\end{theorem}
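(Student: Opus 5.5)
We split $\mathbb{K}$ into its equilibrium mean and a fluctuation, $\mathbb{K}=\hat{\mathbb{K}}+(\mathbb{K}-\hat{\mathbb{K}})$, where $\hat{\mathbb{K}}_{ij}=\frac{1}{N}E_{\mu_N}[f_{ij}]$ and $f_{ij}(\sigma)=\sigma_i\sigma_j$ as in the proof of Theorem~\ref{thm:almost}. Since $\Vert\cdot\Vert_1$ is a norm and $(a+b)^2\leq 2a^2+2b^2$,
\begin{align*}
\Vert\mathbb{K}\Vert_1^2\leq 2\Vert\hat{\mathbb{K}}\Vert_1^2+2\Vert\mathbb{K}-\hat{\mathbb{K}}\Vert_1^2 .
\end{align*}
The deterministic part is already controlled by \eqref{eq:upperboundsamplecov}:
\begin{align*}
\Vert\hat{\mathbb{K}}\Vert_1\leq \frac{1}{N}\Big(1+\frac{2\lambda_+}{\lambda_+-\lambda_-}\Big)=\frac{e^{2\hat{J}}}{N},
\end{align*}
using the computation in \eqref{eq:upperspeci}. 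Hence $\Vert\hat{\mathbb{K}}\Vert_1^2\to0$ as $N\to+\infty$, for fixed $\hat{J}$.

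For the fluctuation we do not try to control the maximum cleverly. We bound the maximum over $i$ by the sum over $i$ and apply Cauchy--Schwarz in $j$:
\begin{align*}
\Vert\mathbb{K}-\hat{\mathbb{K}}\Vert_1^2\leq \sum_{i=1}^N\Big(\sum_{j=1}^N|\mathbb{K}_{ij}-\hat{\mathbb{K}}_{ij}|\Big)^2\leq N\sum_{i,j=1}^N|\mathbb{K}_{ij}-\hat{\mathbb{K}}_{ij}|^2 .
\end{align*}
Each entry has the form
\begin{align*}
\mathbb{K}_{ij}-\hat{\mathbb{K}}_{ij}=\frac{1}{N}\Big(\frac{1}{M}\sum_{k=1}^M f_{ij}(Y_k)-E_{\mu_N}[f_{ij}]\Big).
\end{align*}
We apply the second estimate of Theorem~\ref{thm: chernoffbound} with $f=f_{ij}$, noting that $\Vert f_{ij}\Vert_{L^2(\mu_N)}=1$ because $f_{ij}^2\equiv1$. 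Writing $c(\hat{J})=e^{2\hat{J}}(e^{4\hat{J}}+1)\big(\frac12+\hat{J}e^{\frac{e^{2\hat{J}}-1}{2}}\big)$, this gives
\begin{align*}
E^{\mathbb{P}_{\mu_N}}\big[|\mathbb{K}_{ij}-\hat{\mathbb{K}}_{ij}|^2\big]\leq \frac{1}{N^2}\cdot\frac{c(\hat{J})N}{M}.
\end{align*}
Summing over the $N^2$ pairs $(i,j)$ and multiplying by the prefactor $N$ yields
\begin{align*}
E^{\mathbb{P}_{\mu_N}}\big[\Vert\mathbb{K}-\hat{\mathbb{K}}\Vert_1^2\big]\leq \frac{c(\hat{J})N^2}{M},
\end{align*}
which tends to $0$ exactly when $N=o(\sqrt{M})$. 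This is where the hypothesis enters. Combining the two parts,
\begin{align*}
E^{\mathbb{P}_{\mu_N}}\big[\Vert\mathbb{K}\Vert_1^2\big]\leq \frac{2e^{4\hat{J}}}{N^2}+\frac{2c(\hat{J})N^2}{M}\to0 .
\end{align*}

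The statement about the spectral radius then follows from the Gershgorin bound $0\leq\rho(\mathbb{K})\leq\Vert\mathbb{K}\Vert_1$ and Jensen's inequality, which give $E^{\mathbb{P}_{\mu_N}}[\rho(\mathbb{K})]\leq \big(E^{\mathbb{P}_{\mu_N}}[\Vert\mathbb{K}\Vert_1^2]\big)^{1/2}$.

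The main obstacle is the fluctuation term. The crude step of bounding the maximum by a sum, followed by Cauchy--Schwarz, costs a factor $N^2$. Combined with the linear-in-$N$ Poincaré constant, this produces the scaling $N^2/M$. One should check that Theorem~\ref{thm: chernoffbound} applies uniformly to each $f_{ij}$ with the chain started from $\mu_N$, which it does since the bound there depends only on $\Vert f\Vert_{L^2(\mu_N)}$. No sharper concentration, such as log-Sobolev tail bounds with a union bound, is needed for the stated rate.
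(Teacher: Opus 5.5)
Your argument is correct and is essentially the paper's proof. It uses the same three ingredients: the entrywise bound $E^{\mathbb{P}_{\mu_N}}[|\mathbb{K}_{ij}-\hat{\mathbb{K}}_{ij}|^2]\leq c(\hat J)/(MN)$ from Theorem~\ref{thm: chernoffbound} with $\|f_{ij}\|_{L^2(\mu_N)}=1$, the maximum over rows bounded by the sum over rows, and \eqref{eq:upperboundsamplecov} for $\hat{\mathbb{K}}$. The fluctuation term $N^2/M$ is identical. You differ in two small ways: you split $\mathbb{K}=\hat{\mathbb{K}}+(\mathbb{K}-\hat{\mathbb{K}})$ at the level of the norm, so your deterministic term is $O(N^{-2})$ rather than the $O(N^{-1})$ the paper gets from its row-wise Minkowski step, and you write out the Gershgorin--Jensen step for $\rho(\mathbb{K})$, which the paper leaves implicit.

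One harmless slip: $1+\frac{2\lambda_+}{\lambda_+-\lambda_-}=e^{2\hat J}+2$, not $e^{2\hat J}$. Getting $e^{2\hat J}$ requires keeping the factor $\boldsymbol{\theta}$ as in \eqref{eq:upperspeci}. Either bound is $O(1/N)$, so the conclusion is unaffected.
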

\begin{proof}
According to the estimates in Theorem \ref{thm: chernoffbound}, we have
\begin{align*}
	E^{\mathbb{P}_{\mu_{N}}}\left[\left( \frac{1}{M}\sum_{k=1}^{M}f(Y_{k})-E_{\mu_{N}}[f]\right)^{2}\right]
	\leq\frac{\|f\|^{2}_{L^{2}(\mu_N)}}{M}e^{2\hat{J}}(e^{4\hat{J}}+1)\big(\frac{1}{2}+\hat{J}e^{\frac{e^{2\hat{J}}-1}{2}}\big)N.
\end{align*}	
Taking $f$ to be the real-valued function $f_{ij}$ with $f_{ij}(\sigma)=\sigma_i\sigma_j$, we have
\begin{align*}	
	E^{\mathbb{P}_{\mu_{N}}}[|\mathbb{K}_{ij}-\hat{\mathbb{K}}_{ij}|^{2}]
	=&E^{\mathbb{P}_{\mu_{N}}}[|\frac{1}{MN}\sum_{k=1}^{M}Y_{ik}Y_{jk}-\frac{1}{N}E_{\mu_{N}}[f_{ij}]|^{2}]\\
	\leq& \frac{1}{MN}e^{2\hat{J}}(e^{4\hat{J}}+1)\big(\frac{1}{2}+\hat{J}e^{\frac{e^{2\hat{J}}-1}{2}}\big).
\end{align*}
Therefore, for any $1\leq i\leq N$,
\begin{align*}
	\sqrt{E^{\mathbb{P}_{\mu_{N}}}\left[\left(\sum_{j=1}^{N}|\mathbb{K}_{ij}|-\sum_{j=1}^{N}|\hat{\mathbb{K}}_{ij}|\right)^{2}\right]}
	\leq&\sum_{j=1}^{N}\sqrt{	E^{\mathbb{P}_{\mu_{N}}}[( |\mathbb{K}_{ij}|-|\hat{\mathbb{K}}_{ij}|)^{2}]}
	\leq\sum_{j=1}^{N}\sqrt{	E^{\mathbb{P}_{\mu_{N}}}[| \mathbb{K}_{ij}-\hat{\mathbb{K}}_{ij}|^{2}]}\\
	\leq&\sqrt{\frac{N}{M}e^{2\hat{J}}(e^{4\hat{J}}+1)\big(\frac{1}{2}+\hat{J}e^{\frac{e^{2\hat{J}}-1}{2}}\big)}.
\end{align*}
By~\eqref{eq:upperboundsamplecov}, we have
\begin{align*}	\sum_{j=1}^{N}|\hat{\mathbb{K}}_{ij}|&\leq \frac{1}{N}+\frac{1}{N}\frac{2\lambda_{+}}{\left(\lambda_{+}-\lambda_{-}\right)}\frac{\boldsymbol{\theta}-\boldsymbol{\theta}^{N}}{1+\boldsymbol{\theta}^{N}}.
\end{align*}
Hence,
\begin{align*}
	\sqrt{E^{\mathbb{P}_{\mu_{N}}}[|\sum_{j=1}^{N}|\mathbb{K}_{ij}||^{2}]}
	\leq&\sqrt{E^{\mathbb{P}_{\mu_{N}}}\left[\left(\sum_{j=1}^{N}|\mathbb{K}_{ij}|-\sum_{j=1}^{N}|\hat{\mathbb{K}}_{ij}|\right)^{2}\right]}+\sqrt{E^{\mathbb{P}_{\mu_{N}}}[|\sum_{j=1}^{N}|\hat{\mathbb{K}}_{ij}||^{2}]}\\
	\leq&\sqrt{\frac{N}{M}e^{2\hat{J}}(e^{4\hat{J}}+1)\big(\frac{1}{2}+\hat{J}e^{\frac{e^{2\hat{J}}-1}{2}}\big)}+\frac{1}{N}+\frac{1}{N}\frac{2\lambda_{+}}{\left(\lambda_{+}-\lambda_{-}\right)}\frac{\boldsymbol{\theta}-\boldsymbol{\theta}^{N}}{1+\boldsymbol{\theta}^{N}}.
\end{align*}
Applying the elementary inequality 
\begin{align*}
	\max_{1\leq i\leq N}\{a_{i}\}\leq\sqrt{\sum_{i=1}^{N}a_{i}^{2}},\qquad a_{i}\geq 0, 
\end{align*}
we obtain
\begin{align*}
	E^{\mathbb{P}_{\mu_{N}}}[\Vert\mathbb{K}\Vert^{2}_{1}]=&E^{\mathbb{P}_{\mu_{N}}}[|\max_{1\leq i\leq N }\sum_{j=1}^{N}|\mathbb{K}_{ij}||^{2}]
	\leq\sum_{i=1}^{N}E^{\mathbb{P}_{\mu_{N}}}[|\sum_{j=1}^{N}|\mathbb{K}_{ij}||^{2}]\\
	\leq&\big(\sqrt{\frac{N^{2}}{M}e^{2\hat{J}}(e^{4\hat{J}}+1)\big(\frac{1}{2}+\hat{J}e^{\frac{e^{2\hat{J}}-1}{2}}\big)}+\frac{1}{\sqrt{N}}+\frac{1}{\sqrt{N}}\frac{2\lambda_{+}}{\left(\lambda_{+}-\lambda_{-}\right)}\frac{\boldsymbol{\theta}-\boldsymbol{\theta}^{N}}{1+\boldsymbol{\theta}^{N}}\big)^{2}.
\end{align*}
Finally, for $N=o(\sqrt{M})$, we have
\begin{align*}
	\lim_{N,M\rightarrow+\infty}E^{\mathbb{P}_{\mu_{N}}}[\Vert\mathbb{K}\Vert^{2}_{1}]=0.
\end{align*}
This completes the proof of our main theorem.
\end{proof}
\begin{remark}
The spectral radius of the sample covariance matrix $\mathbb{K}$ converges to $0$ in $L^{2}(\mathbb{P}_{\mu_{N}})$ under the double limit $M,N\to+\infty$ with $N=o(\sqrt{M})$ for $\hat{J}\in[0,+\infty)$, which is consistent with the absence of a phase transition in this regime. Together with Theorem~\ref{thm:almost} and Theorem~\ref{thm:doubleinproba}, this shows that, under both the iterated limit and the double limit, the spectral radius of $\mathbb{K}$ vanishes in the entire subcritical regime. This provides further theoretical support for Chen's simulation results.
\end{remark}

\subsection{Theoretical results and consistency with simulations}\label{sec:interpretation}
Chen's simulations indicate a sharp contrast in the spectral behavior of the sample covariance matrix $\mathbb{K}$ between the subcritical regime and the critical point. Taking the 1D Ising model as an example, in the subcritical regime $\hat{J}\in[0,+\infty)$, the largest eigenvalue $\lambda_{1}(\mathbb{K})$ of the sample covariance matrix $\mathbb{K}$ vanishes as $M\to+\infty$ and $N\to+\infty$, while at the critical point $\hat{J}=+\infty$, it converges to a finite nonzero limit. This indicates a condensation of eigen microstate and is considered as a phase transition, with the new phase characterized by the eigen microstate corresponding to $\lambda_{1}(\mathbb{K})$, as noted by Chen et al. in recent studies \cite{hu2019ducument,sun2021ducument}.

Our results are consistent with these observations and provide a rigorous characterization of this behavior in the 1D Ising model. At the critical point $\hat{J}=+\infty$, the largest eigenvalue $\lambda_{1}(\mathbb{K})$ converges to 1 almost surely as $M\rightarrow+\infty$ for any fixed $N$, and thus admits a finite nonzero limit under $M\rightarrow +\infty$, $N\rightarrow +\infty$. In contrast, for $\hat{J}\in[0,+\infty)$, the spectral radius of $\mathbb{K}$ vanishes almost surely under the iterated limit $M\to+\infty$, $N\to+\infty$, and also vanishes in $L^{2}(\mathbb{P}_{\mu_N})$ under the double limit $M,N\to+\infty$ with $N=o(\sqrt{M})$. These results show a clear distinction between the critical and subcritical regimes, which exhibit fundamentally different spectral behaviors of the sample covariance matrix $\mathbb{K}$. The two asymptotic behaviors of  $\mathbb{K}$ agree with those reported in Chen's simulations for the 1D Ising model, thus providing theoretical support for their simulation results.

\section{Acknowledgement}
This work was supported by National Natural Science Foundation of China (Grant No.12288201).

\end{document}